\def\set@curr@file#1{\def\@curr@file{#1}} %
\newcommand{\bSigma}{\boldsymbol\Sigma}
\newcommand{\Kopt}[1]{K^\star_{#1}}
\newcommand{\kopt}[1]{k^\star_{#1}}
\newcommand{\breadth}{\theta}
\newcommand{\Envs}{\Phi}
\newcommand{\env}{\phi}
\newcommand{\Refpolicies}{\Pi_{\mathrm{ref}}}
\newcommand{\envword}{task}
\newcommand{\envwords}{tasks}
\newcommand{\multienv}{multi-task}
\newcommand{\Multienv}{Multi-task}
\newcommand{\singleenv}{single-task}
\newcommand{\multidyn}{multi-dynamics}
\newcommand{\Multidyn}{Multi-dynamics}
\newcommand{\DDFproblem}{DDF problem}
\newcommand{\DDFproblems}{DDF problems}
\newcommand{\Aset}{\mathbf{A}}
\newcommand{\Bset}{\mathbf{B}}
\newcommand{\subopt}{\alpha}
\newcommand{\suboptneighb}[2]{\mathscr{N}_{#1}(#2)}
\newcommand{\coveringset}{\cC}
\newcommand{\coveringnum}[2]{N_{#1}(#2)}
\newcommand{\ratiolb}{\tilde r}
\newcommand{\Ric}{\mathrm{Ric}}
\newcommand{\GCC}{{\normalfont \texttt{GCC}}}
\newcommand{\pos}{\mathbf{x}}
\newcommand{\vel}{\mathbf{v}}
\newcommand{\attitude}{\mathbf{r}}
\newcommand{\anglevel}{\boldsymbol{\omega}}
\newcommand{\roll}{\phi}
\newcommand{\pitch}{\theta}
\newcommand{\yaw}{\psi}
\newcommand\inputpgf[2]{{
\let\pgfimageWithoutPath\pgfimage
\renewcommand{\pgfimage}[2][]{\pgfimageWithoutPath[##1]{#1/##2}}
\input{#1/#2}
}}
\newcommand\ifextended[2]{
   \ifdefstring{\ABRIDGED}{true}{
      #2
   }{
      #1
   }
}
\title{%
	Suboptimal coverings for continuous spaces of control tasks
}
\author{%
   \Name{James A. Preiss} \Email{japreiss@usc.edu} \and
   \Name{Gaurav S. Sukhatme} \Email{gaurav@usc.edu} \\
   \addr University of Southern California, Los Angeles, USA
}
\date{\today}
\begin{document}

\maketitle

\begin{abstract}
	We propose the $\alpha$-suboptimal covering number
	to characterize \multienv\ control problems
	where the set of dynamical systems and/or cost functions is infinite,
	analogous to the cardinality of finite \envword\ sets.
	This notion may help quantify
	the function class expressiveness
	needed to represent a good \multienv\ policy,
	which is important for learning-based control methods that use parameterized function approximation.
	We study suboptimal covering numbers for
	linear dynamical systems with quadratic cost (LQR problems)
	and construct a class of \multienv\ LQR problems amenable to analysis.
	For the scalar case, we show logarithmic dependence on the ``breadth'' of the space.
	For the matrix case, we present experiments
	1) measuring the efficiency of a particular constructive cover, and
	2) visualizing the behavior of two candidate systems for the lower bound.%
	\ifextended{%
	}{%
		An extended version is available at \url{jpreiss.github.io/pubs/suboptimal_coverings.pdf}.
	}
\end{abstract}

\section{Introduction}
An advanced control system such as a mobile robot may be required to perform many different tasks.
If the \envword\ set is finite, like selecting between ``map an environment'' and ``deliver a package'',
then its size is naturally quantified by the number of \envwords.
If the \envword\ set is infinite, like delivering packages with arbitrary mass and inertial properties,
then its size is not so easily quantified.
Even if the task space is equipped with a metric or measure,
these structures may be only weakly linked to the diversity of behavior
required for good performance on all \envwords.

Our interest in this issue is motivated by \multienv\ paradigms in learning-based control,
where the policy is selected from a parameterized family of functions
that map state and \envword\ parameters directly to actions.
As the \envword\ space expands from a singleton set,
we expect to need a more expressive class of functions
to represent a good \multienv\ policy.
In this work, we propose the 
\emph{$\alpha$-suboptimal covering number}
to capture this idea. %
For a \envword\ space $\Envs$
and a suboptimality ratio $\alpha > 1$,
we define $\coveringnum{\alpha}{\Envs}$ as
the size of the smallest set of \singleenv\ policies $\coveringset$ %
such that for every $\env \in \Envs$, at least one $\pi \in \coveringset$
has a cost ratio no greater than $\alpha$ relative to the optimal policy for $\env$.
If the policies in $\coveringset$ are parameterized functions,
then $\coveringset$ provides an upper bound on the number of parameters
needed to represent an $\alpha$-suboptimal \multienv\ policy.
In switching-based adaptive control, where $\env$ is unknown,
a smaller $\coveringset$ implies a faster convergence time.

To study suboptimal covering numbers in a concrete setting,
we consider linear dynamical systems
with quadratic cost functions, or LQR problems.
LQR problems are a common setting to analyze learning algorithms because detailed properties are known \citep[for example]{fazel-global-lqr}.
This has led to new inquiries into their fundamental properties \citep{bu-topological-gains}.
We construct a family of well-behaved \multienv\ LQR problems 
where $\Envs$ is controlled by a ``breadth'' parameter $\breadth \in [1, \infty)$,
and for which
$\coveringnum{\alpha}{\Envs_\breadth}$
is finite and increasing in $\breadth$.
For the special case of a scalar LQR problem,
we derive matching logarithmic upper and lower bounds on
$\coveringnum{\alpha}{\Envs_\breadth}$
as a function of $\breadth$.
As an effort towards analogous bounds for the matrix case,
we present empirical results intended to shed light on the problem structure.
For the upper bound, we analyze properties of a logical extension of our scalar cover.
For the lower bound, we visualize suboptimal neighborhoods for two choices of ``extremal''  systems
and find surprising topological behavior for one choice.

This paper is an initial step towards a comprehensive theory.
In addition to a more complete picture of deterministic LQR systems,
ideas of $\alpha$-suboptimal coverings could be applied to a wide range of \multienv\ problems.
We also hope they will lead to insights
about function class expressiveness in learning-based \multienv\ control.

\section{Problem setting}
In this section, we first define suboptimal covering numbers
with respect to an abstract \multienv\ control problem
independent of distinctions
such as continuous vs.\ discrete time and stochastic vs.\ deterministic.
We then instantiate these notions for a particular class of LQR problems.

\paragraph{Notation}
The set of all functions $\cX \mapsto \cY$ is denoted by $\cY^\cX$.
The relation $A \succeq B$ (resp. ${A \succ B}$) denotes that $A - B$ is positive semidefinite (resp.\ definite).
Matrices of zeros and ones, with dimension implied by context, are denoted by $\zero$ and $\one$.
The integers $\{1, \dots, N\}$ are denoted by $[N]$.

\paragraph{\Multienv\ optimal control}
	A \emph{\multienv\ optimal control problem} is defined by
	an arbitrary state space $\cX$,
	action space $\cU$,
	and \envword\ space $\Envs$;
	a class of reference policies $\Refpolicies \subseteq \cU^\cX$,
	and a strictly positive objective function $J : \Envs \times \cU^\cX \mapsto \R_{> 0}$.
	The partial application of $J$ for $\env \in \Envs$
	is denoted by $J_\env : \cU^\cX \mapsto \R$.
	The optimal reference cost for an \envword\ is denoted by
	$J^\star_\env = \min_{\pi \in \Refpolicies} J_\env(\pi)$.

\paragraph{Suboptimal coverings}
	Consider a \multienv\ optimal control problem $(\cX, \cU, \Envs, \Refpolicies)$
	and a suboptimality ratio $\alpha > 1$.
	The \emph{$\subopt$-suboptimal neighborhood}
	of the policy $\pi : \cX \mapsto \cU$ is
	\(
		\suboptneighb{\subopt}{\pi} = \left\{
			\env \in \Envs :
			\nofrac{J_\env(\pi)}{J^\star_\env}
			\leq \subopt
		\right\}.
	\)
	The set $\coveringset \subseteq \cU^\cX$
	is an \emph{$\subopt$-suboptimal cover} of $\ \Envs$
	if
	\(
		\ \bigcup_{\pi \in \coveringset} \suboptneighb{\subopt}{\pi}
		= \Envs.
	\)
	The \emph{$\subopt$-suboptimal covering number} of $\Envs$,
	denoted $\coveringnum{\subopt}{\Envs}$,
	is the size of the smallest finite $\alpha$-suboptimal cover of $\Envs$ if one exists,
	or $\infty$ otherwise.

\paragraph{Standard LQR problem}
A continuous-time, deterministic, infinite-horizon, time-invariant LQR problem with full-state feedback is defined by 
state space $\cX = \R^n$,
action space $\cU = \R^m$,
linear dynamics
\(
	\dot x = Ax + Bu,
\)
where $A \in \R^{n \times n},\ B \in \R^{n \times m}$, and quadratic cost 
\[
	J(\pi) = \E_{x(0) \sim \Normal(\zero, I)}
		\int_0^\infty \big(
			x^\top Q x + u^\top R u
		\big)\, \d t,
\]
where $Q \succeq \zero$ and $R \succ \zero$ are cost matrices of appropriate dimensions
and $\Normal(\zero, I)$ is the unit Gaussian distribution.
For the purposes of this paper, the pair $(A, B)$ is \emph{controllable}
if $J(\pi) < \infty$ for some policy $\pi$.
If $(A, B)$ is controllable,
then the optimal policy %
is the linear
$u = \Kopt{} x$,
where $\Kopt{} \in \R^{m \times n}$
can be computed
by finding the unique maximal positive semidefinite solution $P$
of the algebraic Riccati equation
\(
	A^\top P + P A - P B R^{-1} B^\top P + Q = \zero
\)
(henceforth called the \emph{maximal solution})
and letting $\Kopt{} = -R^{-1} B^\top P$
\citep{kalman-contributions}.
Additionally, $J(\Kopt{}) = \trace{P}$.
An arbitrary controller $K \in \R^{m \times n}$ is \emph{stabilizing} if
$J(K) < \infty$,
in which case $J(K)$ satisfies
\begin{equation}
\label{eq:cost-suboptimal}
	J(K) = \trace{(Q + K^\top R K) W}\!, \
\text{where} \ \ 
	W = \int_0^\infty {e^{t(A + BK)}}^\top {e^{t(A + BK)}} dt.
\end{equation}
$W$ can be computed by solving the Lyapunov equation
\(
	(A + BK)^\top W + W (A + BK)  + I = \zero.
\)

\paragraph{\Multidyn\ LQR}
\label{sec:multilqr}
A fully general formulation of \multienv\ LQR
would allow variations in each of $(A, B, Q, R)$,
but this creates redundancy.
Any LQR problem
where $Q \succ 0$
is equivalent via change of coordinates
to another LQR problem where $Q = I$ and $R = I$.
To reduce redundancy,
we consider only \emph{\multidyn} LQR problems where $Q = I_{n \times n}$ and $R = I_{m \times m}$
in this work.
The reference policy class is linear: $\Refpolicies = \R^{m \times n}$.

A \multidyn\ LQR problem can be defined by
$\Envs = \Aset \times \Bset$
for some sets $\Aset \subseteq \R^{n \times n}$ and ${\Bset \subseteq \R^{n \times m}}$,
but it is not obvious how to design $\Aset$ and $\Bset$.
To support an asymptotic analysis of
$\coveringnum{\subopt}{\Envs}$,
the \envword\ space $\Envs$ should have a real-valued ``breadth'' parameter $\breadth$
that sweeps from a single \envword\ to sets with arbitrarily large, but finite, covering numbers.
Matrix norm balls are a popular representation of dynamics uncertainty in the robust control literature,
but they can easily contain uncontrollable pairs,
and removing the uncontrollable pairs
can lead to an infinite covering number.
For example, in the scalar problem $\Aset = \{a\},\ \Bset = [-\breadth, 0) \cup (0, \breadth]$, where $a > 0$,
it can be shown that
no $\alpha$-suboptimal cover is finite.

These properties are worrying, but the example $\Bset$ is pathological.
The zero crossing is analogous to reversing the direction of force applied by an actuator in a physical system.
A more relevant multi-dynamics problem is variations
in mass or actuator strength,
whose signs are fixed.
We formalize this idea with the following definition.
\paragraph{Decomposed dynamics form}
\label{def:B-SVD}
	Fix $A \in \R^{n \times n}$
	and a \emph{breadth} parameter $\breadth \geq 1$.
	Let
		$\Bset = \{ U \Sigma V^\top : \Sigma \in \bSigma \}$,
		where
		$\bSigma = \{ \diag(\sigma) : \sigma \in [\recip \breadth, 1]^d \}$.
	The matrices $U \in \R^{n \times d}$
	and
	$V \in \R^{m \times d}$ each have rank $d$, where
	$0 < d \leq \min \{n, m\}$.
	The tuple $(A, U, V, \breadth)$
	fully defines a \emph{\multienv\ LQR problem in decomposed dynamics form},
	or \emph{\DDFproblem} for brevity.

\paragraph{}
The continuity of the LQR cost \eqref{eq:cost-suboptimal} with respect to $B$
and the compactness of $\Envs$ for any $\breadth$ imply that
$\coveringnum{\alpha}{\Envs_\breadth}$ is always finite.
Variations in $A$ are redundant in the scalar case where we focus our theoretical work in this paper.
The definition can be extended to include them in future work.

\paragraph{Linearized quadrotor example}

\label{sec:quadrotor}
As an example of a realistic \DDFproblem,
we consider the quadrotor helicopter illustrated in \Cref{fig:quadrotor}.
Near the hover state, its full nonlinear dynamics are well approximated by a linearization.
The state is given by
$
	x = (\pos, \vel, \attitude, \anglevel),
$
where $\pos \in \R^3$ is position,
$\vel \in \R^3$ is linear velocity,
$\attitude \in \R^3$ is attitude Euler angles,
and $\anglevel \in \R^3$ is angular velocity.
The inputs
$u \in \R^4_{\geq 0}$
are the squared angular velocities of the propellers.

Many factors influence the response to inputs,
including geometry, mass, moments of inertia, motor properties, and propeller aerodynamics.
These can be combined and partially nondimensionalized
into four control authority parameters to form $\env \in \Envs$.
The hover state occurs at $x = \zero,\ u \propto \one$,
where the constant input counteracts gravity.
The linearized dynamics are given by
\newcommand{\Gmatrix}{\begin{bsmallmatrix*}[r] 0 & g & 0 \\ \hspace{-1mm}-g & 0 & 0 \\ 0 & 0 & 0 \\ \end{bsmallmatrix*}}
\begin{equation*}
\label{eq:quad-linearized}
	\dot x
	=
	\underbrace{
		\begin{bsmallmatrix}
			\zero &     I & \zero    & \zero \\
			\zero & \zero & G & \zero \\
			\zero & \zero & \zero    &     I \\
			\zero & \zero & \zero    & \zero \\
		\end{bsmallmatrix}
	}_A
	x
	+
	\underbrace{
		\begin{bsmallmatrix}
			\zero    & \zero \\
			\hat e_z & \zero \\
			\zero    & \zero \\
			\zero    &     I \\
		\end{bsmallmatrix}
	}_U
	\underbrace{
		\begin{bsmallmatrix}
			\sigma_z & & & \\
			& \sigma_\roll & & \\
			& & \sigma_\pitch & \\
			& & & \sigma_\yaw \\
		\end{bsmallmatrix}
	}_\Sigma
	\underbrace{
		\begin{bsmallmatrix*}[r]
			 1 &  1 &  1 &  1 \\
			 1 & -1 & -1 &  1 \\
			-1 & -1 &  1 &  1 \\
			 1 & -1 &  1 & -1 \\
		\end{bsmallmatrix*}
	}_{V^\top}
	u,
	\quad
	G = \Gmatrix,
\end{equation*}
where $g$ is the gravitational constant
and $\hat e_z = [0\ 0\ 1]^\top$.
The parameters
$(\sigma_z,\ \sigma_\roll,\ \sigma_\pitch,\ \sigma_\yaw)$
denote
the thrust, roll, pitch, and yaw authority constants respectively.
Since we use the convention $\sigma \in [\recip \theta, 1]$,
the maximum value of each constant can be varied
by scaling the columns of $U$.

\begin{figure}[tpb]
	\begin{minipage}{0.28\textwidth}
		\tikzstyle{axis} = [-latex, thick]
\tikzstyle{thrust} = [-latex]
\tikzstyle{rot} = [-latex]
\tikzstyle{propdisc} = [fill=gray!20, fill opacity=0.90]
\tikzstyle{propmotion} = [gray!80]
\tikzstyle{propcw} = [->,black!75]
\tikzstyle{propccw} = [<-,black!75]

\tdplotsetmaincoords{45}{160}
\begin{tikzpicture}[scale=0.70,tdplot_main_coords]

  \pgfmathsetmacro{\xyaxlen}{2.5}
  \pgfmathsetmacro{\zaxlen}{2.7}
  \draw[axis] (0,0,0) -- (\xyaxlen,0,0) node[anchor=north east]{$x$};
  \draw[axis] (0,0,0) -- (0,\xyaxlen,0) node[anchor=west]{$y$};

  \tdplotsetrotatedcoords{0}{0}{45}
  \begin{scope}[tdplot_rotated_coords]
    \pgfmathsetmacro{\thick}{0.13}
    \pgfmathsetmacro{\shaftctr}{1.5}
    \pgfmathsetmacro{\len}{\shaftctr+\thick}
    \pgfmathsetmacro{\z}{0.0}
    \draw[fill=gray!70,tdplot_rotated_coords]
      (-\len,\thick,\z) --
      (-\thick,\thick,\z) --
      (-\thick,\len,\z) --
      (\thick,\len,\z) --
      (\thick,\thick,\z) --
      (\len,\thick,\z) --
      (\len,-\thick,\z) --
      (\thick,-\thick,\z) --
      (\thick,-\len,\z) --
      (-\thick,-\len,\z) --
      (-\thick,-\thick,\z) --
      (-\len,-\thick,\z) --
      cycle;

    \newcommand{\prop}[3]{
      \pgfmathsetmacro{\propradius}{0.8}
      \draw[propdisc] (#1,#2,0) circle (\propradius);
      \tdplotdefinepoints(#1,#2,0)(#1-0.15,#2+0.2,0)(#1+0.3,#2+0.2,0)
      \tdplotdrawpolytopearc[#3]{0.4*\propradius}{}{}
    }
    \prop{\shaftctr}{0}{propccw}
    \prop{0}{\shaftctr}{propcw}
    \prop{-\shaftctr}{0}{propccw}
    \prop{0}{-\shaftctr}{propcw}

    \pgfmathsetmacro{\thrusttext}{0.05}
    \pgfmathsetmacro{\thrustlen}{1.15}
    \draw[thrust] ( \shaftctr,0,0) -- ++(0,0,\thrustlen) node[pos=\thrusttext, left] {$u_1$};
    \draw[thrust] (0, \shaftctr,0) -- ++(0,0,\thrustlen) node[pos=\thrusttext,right] {$u_2$};
    \draw[thrust] (-\shaftctr,0,0) -- ++(0,0,\thrustlen) node[pos=\thrusttext,right] {$u_3$};
    \draw[thrust] (0,-\shaftctr,0) -- ++(0,0,\thrustlen) node[pos=\thrusttext, left] {$u_4$};
  \end{scope}

  \draw[axis] (0,0,0) -- (0,0,\zaxlen) node[anchor=south]{$z$};

  \pgfmathsetmacro{\arcinset}{0.4}
  \pgfmathsetmacro{\arcradius}{0.4}
  \tdplotdefinepoints(\xyaxlen-\arcinset,0,0)(\xyaxlen-\arcinset,0.1,-1)(\xyaxlen-\arcinset,0,1)
  \tdplotdrawpolytopearc[rot]{\arcradius}{anchor=north}{$\phi$}
  \tdplotdefinepoints(0,\xyaxlen-\arcinset,0)(-0.1,\xyaxlen-\arcinset,0.3)(0.1,\xyaxlen-\arcinset,-.2)
  \tdplotdrawpolytopearc[rot]{\arcradius}{anchor=north east}{$\theta$}
  \tdplotdefinepoints(0,0,\zaxlen-\arcinset)(0,-1,\zaxlen-\arcinset)(1,1.0,\zaxlen-\arcinset)
  \tdplotdrawpolytopearc[rot]{\arcradius}{anchor=east}{$\psi$}

\end{tikzpicture}
	\end{minipage}
	\hfill
	\begin{minipage}{0.68\textwidth}
		\caption{%
			Quadrotor helicopter with position states $x, y, z$,
			attitude states $\roll, \pitch, \yaw$,
			and propeller speed inputs $u_1, u_2, u_3, u_4$.
			The linearized dynamics at hover,
			subject to variations in mass, geometry, etc.,
			can be expressed in decomposed dynamics form---%
			see \Cref{sec:quadrotor}.
		}%
		\label{fig:quadrotor}
	\end{minipage}
\end{figure}

\section{Theoretical results}
\label{sec:results-scalar}

In this section we show logarithmic upper and lower bounds on
the growth of $\coveringnum{\alpha}{\Envs_\breadth}$ in $\breadth$
for scalar \DDFproblems.
We present several intermediate results in matrix form because they are needed for our empirical results later.
We begin with a key lemma in the framework of
\emph{guaranteed cost control} (GCC)
from \citet{petersen-guaranteed-cost},
simplified for our use case.

\begin{lemma}[GCC synthesis, \citet{petersen-guaranteed-cost}]
\label{lem:petersen-gcc}
	Given the \multienv\ LQR problem
		$\Aset = \{ A \}$,
		$\Bset = \{ B_1 \Delta + B_2 : \norm{\Delta} \leq 1 \}$,
	where $B_1, B_2 \in \R^{m \times p}$ are arbitrary for arbitrary $p$,
	and the state cost matrix is $Q \succ \zero$,
	if there exists $\tau > 0$ such that $P \succ \zero$ solves the Riccati equation
	\begin{equation}
	\label{eq:petersen-riccati}
		A^\top P + P A
		+ P \left(
			\textstyle \recip \tau B_1 B_1^\top - \recip{1+\tau} B_2 B_2^\top
		\right) P
		+ Q = \zero,
	\end{equation}
	then the controller
	\(
		K = -\recip{1 + \tau} B_2^\top P
	\)
	has cost $J_B(K) \leq \trace{P}$
	for all $B \in \Bset$.
	Also, $\trace{P}$ is a convex function of $\tau$.
\end{lemma}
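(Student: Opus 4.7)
The plan is to exhibit $P$ as a common quadratic Lyapunov certificate for the entire closed-loop family $\{A + BK : B \in \Bset\}$, so that the standard Lyapunov cost-decay argument transfers to a uniform cost bound. Specifically, I would show
\[
  A_\mathrm{cl}(B)^\top P + P A_\mathrm{cl}(B) + Q + K^\top K \preceq \zero \quad \text{for every } B \in \Bset,
\]
where $A_\mathrm{cl}(B) = A + BK$. Since $P \succ \zero$ and $Q \succ \zero$, this inequality forces $A_\mathrm{cl}(B)$ to be Hurwitz, and along any closed-loop trajectory it yields $\tfrac{d}{dt}(x^\top P x) \leq -(x^\top Q x + u^\top u)$. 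Integrating over $[0, \infty)$, using $x(\infty) = \zero$, and taking expectations over $x(0) \sim \Normal(\zero, I)$ then give $J_B(K) \leq \trace P$.

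The central technical step is to control the indefinite $\Delta$-dependent cross term that appears once $B = B_1 \Delta + B_2$ is substituted. Writing $A + BK = (A + B_2 K) + B_1 \Delta K$, this cross term equals $P B_1 \Delta K + K^\top \Delta^\top B_1^\top P$. A matrix Young inequality with free parameter $\tau > 0$, obtained by expanding $\bigl(\tfrac{1}{\sqrt\tau} B_1^\top P - \sqrt\tau\, \Delta K\bigr)^\top \bigl(\tfrac{1}{\sqrt\tau} B_1^\top P - \sqrt\tau\, \Delta K\bigr) \succeq \zero$ and invoking $\Delta^\top \Delta \preceq I$, produces the $\Delta$-free upper bound
\[
  P B_1 \Delta K + K^\top \Delta^\top B_1^\top P \preceq \tfrac{1}{\tau} P B_1 B_1^\top P + \tau K^\top K.
\]

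With the cross term absorbed, substituting $K = -\tfrac{1}{1+\tau} B_2^\top P$ telescopes the remaining deterministic pieces: $(1+\tau) K^\top K = \tfrac{1}{1+\tau} P B_2 B_2^\top P$, while $K^\top B_2^\top P + P B_2 K = -\tfrac{2}{1+\tau} P B_2 B_2^\top P$, so their sum is $-\tfrac{1}{1+\tau} P B_2 B_2^\top P$. Collecting every surviving term reproduces exactly the left-hand side of \eqref{eq:petersen-riccati}, which vanishes by hypothesis. This establishes the desired Lyapunov inequality uniformly over $B \in \Bset$ and completes the cost-bound proof.

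Convexity of $\trace P(\tau)$ in $\tau$ is a separate structural claim and is, I expect, the main technical obstacle. A direct analytic route is to implicitly differentiate \eqref{eq:petersen-riccati} twice in $\tau$: one obtains Lyapunov equations for $P'(\tau)$ and $P''(\tau)$ driven by combinations of $\tfrac{1}{\tau^2} P B_1 B_1^\top P$ and $\tfrac{1}{(1+\tau)^2} P B_2 B_2^\top P$, and with careful sign bookkeeping one should be able to verify $\tfrac{d^2}{d\tau^2} \trace P(\tau) \geq 0$. An LMI-based alternative would eliminate the quadratic $P B_1 B_1^\top P$ and $P B_2 B_2^\top P$ terms via Schur complement after changes of variables that linearize the bilinear $\tau \cdot P$ couplings, exhibiting $\trace P(\tau)$ as the value of a linear objective over a $\tau$-parametrized convex feasibility set. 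Either route needs careful bookkeeping and is the technical heart of this part of the lemma.
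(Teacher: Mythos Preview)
The paper does not prove this lemma at all; it is quoted from \citet{petersen-guaranteed-cost} and stated without proof, so there is no ``paper's own proof'' to compare against. Your cost-bound argument is correct and is essentially the standard guaranteed-cost derivation: the Young/Petersen inequality $PB_1\Delta K + K^\top\Delta^\top B_1^\top P \preceq \tfrac{1}{\tau}PB_1B_1^\top P + \tau K^\top K$ together with the specific choice $K=-\tfrac{1}{1+\tau}B_2^\top P$ collapses the closed-loop Lyapunov inequality exactly onto \eqref{eq:petersen-riccati}, and the trajectory integration then yields $J_B(K)\le\trace P$ uniformly in $B$.

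The convexity claim, however, is not actually proved in your proposal. You outline two routes (twice implicitly differentiating the Riccati equation, or an LMI/Schur-complement reformulation) but carry out neither, and both hide real work: the second implicit derivative of $P(\tau)$ solves a Lyapunov equation whose forcing term mixes $P'$ with the $\tau$-derivatives of $\tfrac{1}{\tau}B_1B_1^\top-\tfrac{1}{1+\tau}B_2B_2^\top$, and establishing the correct sign requires more than ``careful bookkeeping.'' Since the paper only cites this fact, your sketch is not at odds with anything in the paper, but as a standalone proof it is incomplete on the convexity half.
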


We use the notation
\(
	P, \tau, K = \GCC(A, B_1, B_2, Q)
\)
to indicate that $P, \tau$ solve
\eqref{eq:petersen-riccati}
and $K$ is the corresponding controller.
It is straightforward to show that any \DDFproblem\
can be expressed in the form required by \lemmaref{lem:petersen-gcc}
with additional constraints on $\Delta$.

In the original presentation, \citet{petersen-guaranteed-cost} treat $B_1$ as given,
so they accept that
\eqref{eq:petersen-riccati} may have no solution
(for example, when $A = 2I, B_1 = I, B_2 = \zero$).
Our application requires constructing values of $B_1, B_2$ that guarantee a solution,
motivating the following lemma.
\defcitealias{lancaster-AREs}{Lan95}
We abbreviate the reference text \citet{lancaster-AREs} as \citetalias{lancaster-AREs}.

\begin{lemma}[existence of $\alpha$-suboptimal GCC]
\label{lem:petersen-existence}
	For the \DDFproblem\ $(A, U, V, \theta)$, if $B \in \Bset$ and $\alpha > 1$,
	then there exists $B_1 \neq \zero \in \R^{m \times n}$
	such that the GCC Riccati equation \eqref{eq:petersen-riccati}
	with $B_2 = B$
	has a solution $(P, \tau)$
	satisfying
	$\trace{P} \leq \alpha J^\star_{B}$.
\end{lemma}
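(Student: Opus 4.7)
The plan is to realize the GCC Riccati solution as a small perturbation of the standard LQR solution, exploiting the slack provided by $\alpha > 1$.  If $(A,B)$ is not stabilizable then $J^\star_B = \infty$ and the trace bound is vacuous, so any $B_1 \neq \zero$ making $(A,[B_1\ B])$ stabilizable will do.  In what follows I assume $(A,B)$ stabilizable, and let $P^\star$ be the maximal solution of the standard ARE $A^\top P + PA - PBB^\top P + I = \zero$, so $J^\star_B = \trace{P^\star}$; since $Q=I \succ \zero$, one has $P^\star \succ \zero$.  The strategy is to view $P^\star$ as the degenerate $(\tau,B_1) = (0^+, \zero)$ limit of \eqref{eq:petersen-riccati} and to perturb in two stages.

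First (perturb $\tau$): with $B_1 = \zero$ and $\tau > 0$, equation \eqref{eq:petersen-riccati} reduces to the standard LQR ARE with cost matrices $(Q,R) = (I,(1+\tau)I)$.  By continuous dependence of the maximal ARE solution on its coefficients \citepalias{lancaster-AREs}, its solution $P(\tau)$ tends to $P^\star$ as $\tau \to 0^+$.  Since $\alpha > 1$, I fix $\tau_0 > 0$ small enough that $\trace{P(\tau_0)} \leq \tfrac{1+\alpha}{2} J^\star_B < \alpha J^\star_B$.

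Second (perturb $B_1$): with $\tau = \tau_0$ fixed, set
\[
    E(P, B_1) \;=\; A^\top P + PA + P\bigl[\tfrac{1}{\tau_0} B_1 B_1^\top - \tfrac{1}{1+\tau_0} BB^\top\bigr] P + I,
\]
so $E(P(\tau_0), \zero) = \zero$.  A direct computation shows the Fréchet derivative $\partial_P E$ at this basepoint acts on symmetric $X$ as the Lyapunov operator $X \mapsto A_{\mathrm{cl}}^\top X + X A_{\mathrm{cl}}$, where $A_{\mathrm{cl}} := A - \tfrac{1}{1+\tau_0} BB^\top P(\tau_0)$ is the stabilizing closed-loop matrix from the previous step.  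Because $A_{\mathrm{cl}}$ is Hurwitz, this Lyapunov operator is a bijection on symmetric matrices, so the implicit function theorem yields a continuous branch $B_1 \mapsto P(B_1)$ of symmetric solutions near $B_1 = \zero$, along which $P \succ \zero$ is preserved by continuity from $P(\tau_0) \succ \zero$.  Taking $B_1 = \eps M$ for any fixed $M \neq \zero$ and letting $\eps \to 0^+$ gives $\trace{P(\eps M)} \to \trace{P(\tau_0)} < \alpha J^\star_B$, so some sufficiently small $\eps > 0$ yields $(B_1, \tau_0, P)$ with $B_1 \neq \zero$ and $\trace{P} \leq \alpha J^\star_B$.

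The main obstacle is to verify that the branch produced by the IFT is a \emph{valid} GCC solution in the sense of \lemmaref{lem:petersen-gcc}---i.e.\ that $P(B_1) \succ \zero$ and the induced controller stabilizes the whole $\Bset$---rather than merely an algebraic root of an indefinite-quadratic matrix equation, and to cite the appropriate continuous-dependence theorem for the maximal ARE solution (this is classical, see \citetalias{lancaster-AREs}, but the indefinite quadratic term warrants care).  Everything else is routine continuity and a pigeonhole on $\alpha > 1$.
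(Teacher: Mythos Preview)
Your proof is correct, but the paper takes a shorter route that neatly sidesteps the very obstacle you flag at the end. Instead of perturbing in two stages (first $\tau$ with $B_1=\zero$, then $B_1$ via the implicit function theorem), the paper couples the two by setting $B_1(\tau) = \tau B$. With $B_2 = B$ this makes the quadratic coefficient in \eqref{eq:petersen-riccati} equal to $-\tfrac{1-\tau-\tau^2}{1+\tau}\,BB^\top$, which for $\tau \in (0,\tfrac12)$ is a \emph{negative} semidefinite multiple of $BB^\top$. The GCC equation therefore remains a standard sign-definite ARE with controllable data, so existence of the maximal positive-definite solution and its continuous dependence on $\tau$ are immediate from \citetalias{lancaster-AREs}; sending $\tau \to 0$ recovers $BB^\top$ and hence $P^\star$, and openness of the $\alpha$-sublevel set finishes the argument. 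No IFT, no indefinite term, no question of which algebraic root you landed on.

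Your approach has the mild advantage of showing that \emph{any} small direction $B_1$ works, not only $B_1 \propto B$, but that generality is not used downstream. Also, the ``main obstacle'' you highlight is less serious than you suggest: \lemmaref{lem:petersen-gcc} requires only \emph{some} $P \succ \zero$ solving \eqref{eq:petersen-riccati}, not a maximal one, and positive definiteness of your IFT branch follows directly from continuity and $P(\tau_0) \succ \zero$. So your argument already closes; the paper's device is simply cleaner.
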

\begin{proof}
	For this proof, it will be more convenient to write the algebraic Riccati equation as
	\begin{equation}
		\label{eq:existence-ric}
		A^\top P + P A - P D P + Q = \zero,
	\end{equation}
	where $D \succeq \zero$.
	Let $\cD = \{ D \succeq \zero: (A, D)\ \text{ is controllable} \}$.
	Controllability of $(A, B)$
	implies that
	$BB^\top \in \cD\ $
	\citepalias[Corollary 4.1.3]{lancaster-AREs}.
	Let $\Ric_+$ denote the map from $\cD$ to the
	maximal solution of \eqref{eq:existence-ric},
	which is continuous %
	\citepalias[Theorem 11.2.1]{lancaster-AREs},
	and let $\cD_\alpha = \{D \in \cD : \trace{\Ric_+(D)} < \alpha J^\star_B \}$.
	The set $\cD_\alpha$ is open in $\cD$ by continuity
	and is nonempty because it contains $BB^\top$.
	Now define $B_1(\tau) = \tau B$ for $\tau \in (0, \half)$.
	The equivalent of $D$ in the GCC Riccati equation \eqref{eq:petersen-riccati} becomes
	\[
		\textstyle D(\tau) = -\recip \tau B_1(\tau) B_1(\tau)^\top + \recip{1 + \tau} B_2 B_2^\top
		= \frac{1 - \tau - \tau^2}{1 + \tau} B B^\top.
	\]
	As a positive multiple of $B B^\top$, we know $D(\tau) \in \cD$,
	and because $\lim_{\tau \to 0} D(\tau) = B B^\top$,
	the set of $\tau$ for which $D(\tau) \in \cD_\alpha$ is nonempty.
	Any such $\tau$ and $B_1(\tau)$ provide a solution.
\end{proof}

\noindent Finally, the following comparison result
will be useful in several places.

\begin{lemma}[\citetalias{lancaster-AREs}, Corollary 9.1.6]
\label{lem:lancaster-ARE-domination}
\newcommand{\alt}[1]{\tilde #1}
	Given two algebraic Riccati equations
	\[
		A^\top P + PA - P B B^\top P + Q = \zero
		\quad \text{ and } \quad 
		\alt{A}^\top P + P\alt{A} - P \alt{B}\alt{B}^\top P + \alt{Q} = \zero,
	\]
	with maximal solutions $P$ and $\alt{P}$,
	let
		$X = \bsmallmat{Q & A^\top \\ A & -BB^\top}$
		and
		$\alt{X} = \bsmallmat{\alt{Q} & \alt{A}^\top \\ \alt{A} & -\alt{B}\alt{B}^\top}$.
	If $X \!\succeq \!\alt{X}$, then $P \!\succeq \!\alt{P}$.
\end{lemma}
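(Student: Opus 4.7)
The plan is to show that $\tilde P$ satisfies the \emph{inequality} form of the first ARE---i.e., that $\tilde P$ is a Riccati subsolution---and then invoke the standard fact that the maximal solution of an ARE dominates every Hermitian subsolution.

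First I would exploit $X \succeq \tilde X$ by testing the quadratic form against block vectors $v$ consisting of an arbitrary $x \in \R^n$ stacked on top of $\tilde P x$. A direct block expansion gives
\[
	v^\top X v = x^\top\bigl(Q + A^\top \tilde P + \tilde P A - \tilde P B B^\top \tilde P\bigr) x
\]
and
\[
	v^\top \tilde X v = x^\top\bigl(\tilde Q + \tilde A^\top \tilde P + \tilde P \tilde A - \tilde P \tilde B \tilde B^\top \tilde P\bigr) x.
\]
The second expression vanishes because $\tilde P$ solves the second ARE, so $v^\top X v \geq 0$ for every $x$. Equivalently,
\(
	A^\top \tilde P + \tilde P A - \tilde P B B^\top \tilde P + Q \succeq \zero,
\)
which is precisely the subsolution property for the first ARE.

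Second, I would apply the comparison principle for AREs: if $P'$ is a Hermitian matrix satisfying the subsolution inequality above, then the maximal solution $P$ of the first ARE satisfies $P \succeq P'$; taking $P' = \tilde P$ then closes the proof. A textbook argument for this principle initializes a Newton/Lyapunov iteration at $P'$ and shows monotone convergence upward to $P$: each step closes the loop with $K_i = B^\top P_i$ and produces $P_{i+1}$ by solving a Lyapunov equation whose solvability and closed-loop stability follow from the stabilizing character of the maximal solution. The main obstacle is precisely this convergence argument, which requires verifying that iterates remain sandwiched between $P'$ and $P$ and that the closed-loop matrices $A - B K_i$ stay stable along the way; this is the machinery of invariant-subspace characterizations of maximal solutions developed in \citetalias{lancaster-AREs}, which is what lets us cite the conclusion rather than reprove it here.
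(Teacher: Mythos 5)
The paper itself offers no proof of this lemma---it is cited verbatim as Corollary 9.1.6 of \citetalias{lancaster-AREs}---so the relevant comparison is with the textbook argument, and your proposal is essentially that argument. Your first step is exactly the standard reduction: testing $X \succeq \tilde X$ against vectors $(x, \tilde P x)$ and using that $\tilde P$ solves the second ARE gives $A^\top \tilde P + \tilde P A - \tilde P B B^\top \tilde P + Q \succeq \zero$, i.e.\ $\tilde P$ is a Hermitian subsolution of the first equation, and the conclusion then follows from the maximality theorem (the maximal solution dominates every Hermitian solution of the Riccati \emph{inequality}), which is precisely how the corollary is obtained in the cited text. One caveat concerns your sketch of that second ingredient: a Newton/Kleinman iteration initialized at a subsolution does not in general produce a monotonically increasing sequence converging to the maximal solution (the classical monotone iteration starts from a \emph{stabilizing} initial guess and decreases to it). The usual proof of the maximality property is more direct: with $D = BB^\top$, $\Delta = P - \tilde P$, and $A_+ = A - DP$ the closed loop of the maximal solution, subtracting the two Riccati relations gives $A_+^\top \Delta + \Delta A_+ = -R(\tilde P) - \Delta D \Delta \preceq \zero$, where $R$ denotes the Riccati operator of the first equation, and $\Delta \succeq \zero$ then follows from (marginal) stability of $A_+$, with a limiting argument in the marginally stable case. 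Since you explicitly defer this step to the machinery of \citetalias{lancaster-AREs} rather than relying on the iteration details, the proposal is sound; just be aware that the deferred fact is proved by the Lyapunov comparison above, not by the upward Newton iteration you describe.
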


\subsection{Scalar upper bound}

We are now ready to bound the covering number for scalar systems.
The first lemma bounding $J^\star_{a,b}$ will be useful for the lower bound also.
We then construct a cover inductively.

\begin{lemma}
\label{lem:scalar-cost-ub}
	In a scalar LQR problem,
	if $a > 0$ and $0 < b \leq 1$,
	then the optimal scalar LQR cost satisfies the bounds
	\(
		\nofrac{2a}{b^2}
		<
		J^\star_{a,b}
		<
		\nofrac{(2a + 1)}{b^2}.
	\)
\end{lemma}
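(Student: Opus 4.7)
The plan is to derive a closed form for $J^\star_{a,b}$ directly from the algebraic Riccati equation (ARE) and then produce the two bounds by elementary algebra on the resulting expression.

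First I would specialize the ARE from the \emph{Standard LQR problem} paragraph to the scalar case. With $Q = R = 1$, the equation becomes $2aP - b^2 P^2 + 1 = 0$, a scalar quadratic in $P$. Since $a > 0$ and $b > 0$, the pair $(a,b)$ is controllable, so the maximal positive solution exists and equals the larger root, giving
\[
    J^\star_{a,b} \;=\; P \;=\; \frac{a + \sqrt{a^2 + b^2}}{b^2}.
\]
(One should briefly note that this is indeed the maximal solution, since the other root is negative.)

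For the lower bound, the inequality $\sqrt{a^2 + b^2} > a$ (strict because $b > 0$) immediately gives $J^\star_{a,b} > 2a/b^2$. For the upper bound, I want $a + \sqrt{a^2 + b^2} < 2a + 1$, i.e., $\sqrt{a^2+b^2} < a + 1$. Since both sides are positive, squaring is an equivalence, and this reduces to $b^2 < 2a + 1$, which holds because $b \leq 1$ and $a > 0$ yield $b^2 \leq 1 < 2a + 1$. Dividing by $b^2$ finishes the proof.

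There is essentially no obstacle here—the scalar ARE admits a closed form, and both bounds are straightforward algebraic manipulations of the square root. The only subtle point worth mentioning explicitly is justifying the choice of the positive root as the maximal solution referenced in the \emph{Standard LQR problem} paragraph, which is immediate since the two roots have opposite signs.
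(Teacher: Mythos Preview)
Your proposal is correct and follows essentially the same route as the paper: derive the closed form $J^\star_{a,b} = (a + \sqrt{a^2+b^2})/b^2$ from the scalar Riccati equation, then use $\sqrt{a^2+b^2} > a$ for the lower bound and $a^2 + b^2 < (a+1)^2$ for the upper bound. The paper omits the remark about selecting the positive root, so your version is slightly more careful, but the argument is otherwise identical.
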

\begin{proof}
	The lower bound is visible from
	the closed-form solution for the scalar Riccati equation, which is
	\(
		J^\star_{a,b} = \frac{a + \sqrt{a^2 + b^2}}{b^2}.
	\)
	The upper bound is obtained by substituting $a^2 + b^2 \leq (a + 1)^2$.
\end{proof}

\begin{lemma}
\label{lem:scalar-cover-recursion}
	If $p, \tau, k = \GCC(a, b_1, b_2, q)$,
	then for any $\beta \in (0, 1)$,
	there exists $k' \in \R$
	such that
	$\displaystyle p', \tau, k' = \GCC \left( a, \beta b_1, \beta b_2, \beta^{-2} q \right)$,
	where $\ p' = \beta^{-2} p$.%
\end{lemma}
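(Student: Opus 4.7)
The plan is a direct substitution: verify that the Petersen Riccati equation \eqref{eq:petersen-riccati} with the scaled inputs $(a,\beta b_1,\beta b_2, \beta^{-2} q)$ is satisfied by the candidate pair $(p',\tau) = (\beta^{-2} p,\, \tau)$ (note that the auxiliary parameter $\tau$ is held fixed), then read off the corresponding $k'$ from the feedback formula in \lemmaref{lem:petersen-gcc}.

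First I would write the hypothesis in scalar form. Since $p,\tau,k = \GCC(a,b_1,b_2,q)$, the pair $(p,\tau)$ satisfies
\[
	2ap + \left(\tfrac{1}{\tau} b_1^2 - \tfrac{1}{1+\tau} b_2^2\right) p^2 + q = 0,
\]
and $k = -\tfrac{1}{1+\tau} b_2 p$. The crucial observation is that the $\tau$-dependent coefficients $\tfrac{1}{\tau}$ and $\tfrac{1}{1+\tau}$ are invariant under the rescaling, while the squared input terms scale as $(\beta b_1)^2 = \beta^2 b_1^2$ and $(\beta b_2)^2 = \beta^2 b_2^2$.

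Next I would plug $p' = \beta^{-2} p$ into the rescaled Riccati equation and track the scaling termwise. The linear term becomes $2ap' = \beta^{-2}(2ap)$; the quadratic term acquires a factor of $\beta^{2} \cdot (\beta^{-2})^{2} = \beta^{-2}$ from the combination of input and solution scalings; and the constant term is already $\beta^{-2} q$. Factoring $\beta^{-2}$ out of the whole expression reproduces the original equation, which vanishes by hypothesis, so $(p',\tau)$ is a solution. Then the feedback gain supplied by \lemmaref{lem:petersen-gcc} is $k' = -\tfrac{1}{1+\tau}(\beta b_2) p' = \beta^{-1} k$, establishing existence.

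This lemma is essentially a symmetry statement, so there is no real obstacle beyond careful bookkeeping: the content is the scaling invariance $(b_1,b_2,q,p)\mapsto (\beta b_1, \beta b_2, \beta^{-2} q, \beta^{-2} p)$ of the Petersen Riccati equation at fixed $\tau$. The one point worth flagging is that preserving $\tau$ is what makes the $\tfrac{1}{\tau}$ and $\tfrac{1}{1+\tau}$ factors cancel cleanly — had one tried to rescale $\tau$ as well, the identity would fail.
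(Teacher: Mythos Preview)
Your proposal is correct and essentially identical to the paper's proof: both write the scalar GCC Riccati equation, substitute $p' = \beta^{-2} p$, and observe that a global factor of $\beta^{-2}$ separates out to recover the original equation with the rescaled parameters $(\beta b_1, \beta b_2, \beta^{-2} q)$ at the same $\tau$. You add the explicit computation $k' = \beta^{-1} k$, which the paper omits, but this is a minor elaboration rather than a different approach.
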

\begin{proof}
	In the scalar system,
	the GCC matrix Riccati equation \eqref{eq:petersen-riccati}
	reduces to the quadratic equation
	\begin{equation}
	\label{eq:scalar-ric}
		\left(
			\textstyle \recip \tau b_1^2
			- \recip{1 + \tau} b_2^2
		\right)
		p^2
		+ 2a p
		+ q
		= 0.
	\end{equation}
	Substituting $p' = \beta^{-2} p$ into \eqref{eq:scalar-ric}
	and multiplying by $\beta^{-2}$ yields
	a new instance of~\eqref{eq:scalar-ric}
	with the parameters
		$b_1' = \beta b_1$, \ 
		$b_2' = \beta b_2$, \ 
		$q' = \beta^{-2} q$,
	for which $p'$ is a solution
	with $\tau$ unchanged.
\end{proof}

\begin{theorem}
\label{thm:covering-scalar}
	For the scalar \DDFproblem\ defined by
		$\Aset = \{ a \}$,
		where $a > 0$,
		and 
		$\Bset = \left[\textstyle \recip \breadth, 1 \right]$,
	if $\alpha \geq \frac{2a+1}{2a}$,
	then $\coveringnum{\alpha}{\Bset} = O(\log \breadth)$.
\end{theorem}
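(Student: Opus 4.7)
The plan is to construct a cover consisting of a geometric sequence of Petersen GCC controllers in $[1/\theta, 1]$, deriving the entire cover from a single base construction via Lemma~\ref{lem:scalar-cover-recursion} so that the suboptimality ratio becomes scale-invariant and the count is immediately logarithmic in $\theta$.

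First I would fix $\tau \in (0, (\sqrt 5 - 1)/2)$, to be chosen in terms of $\alpha$ and $a$, and solve the scalar GCC Riccati equation~\eqref{eq:scalar-ric} for $b_1 = \tau$, $b_2 = 1$, $q = 1$, obtaining $p_0 = (a + \sqrt{a^2 + s})/s$ with $s = (1 - \tau - \tau^2)/(1 + \tau)$ together with the corresponding controller $k_0$. By Lemma~\ref{lem:petersen-gcc}, $J_b(k_0) \leq p_0$ for every $b \in [1-\tau,\, 1+\tau]$. Then I would set $\beta = (1-\tau)/(1+\tau) \in (0,1)$ and iterate Lemma~\ref{lem:scalar-cover-recursion} $i$ times to produce controllers $k_i$ with $p_i,\tau,k_i = \GCC(a,\, \beta^i\tau,\, \beta^i,\, \beta^{-2i})$ and $p_i = \beta^{-2i} p_0$. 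Since each scaled problem has state cost $q_i = \beta^{-2i} \geq 1$, its integrand pointwise dominates our $q = 1$ integrand, giving $J_b(k_i) \leq p_i$ on $[\beta^i(1-\tau),\, \beta^i(1+\tau)]$. Consecutive intervals abut because $\beta(1+\tau) = 1-\tau$, so $N = \lceil \log(\theta(1-\tau))/\log(1/\beta)\rceil + 1$ of them cover $[1/\theta, 1]$.

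The crux is obtaining a uniform $\alpha$ bound. For $b$ in the $i$-th interval one has $b \leq \beta^i(1+\tau)$, so the lower bound $J^\star_{a,b} > 2a/b^2$ from Lemma~\ref{lem:scalar-cost-ub} combines with the previous step to give
\[
\frac{J_b(k_i)}{J^\star_{a,b}} \;<\; \frac{\beta^{-2i}\, p_0}{2a \big/ \bigl(\beta^i(1+\tau)\bigr)^2} \;=\; \frac{p_0(1+\tau)^2}{2a},
\]
which is independent of $i$. The upper bound half of Lemma~\ref{lem:scalar-cost-ub}, applied to the Riccati solution $p_0 = J^\star_{a,\sqrt s}$ (valid since $s \leq 1$), then yields the scale-invariant estimate
\[
\frac{J_b(k_i)}{J^\star_{a,b}} \;<\; \frac{(2a+1)(1+\tau)^3}{2a(1-\tau-\tau^2)}.
\]
The right-hand side is continuous in $\tau$ and equals $(2a+1)/(2a)$ at $\tau = 0$, so the hypothesis $\alpha \geq (2a+1)/(2a)$ lets me pick a positive $\tau(\alpha,a)$ making the ratio at most $\alpha$; with $\tau$ fixed, $\beta$ is a constant in $(0,1)$ and $N = O(\log \theta)$ as required.

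The main obstacle is verifying that the scaling of Lemma~\ref{lem:scalar-cover-recursion} produces valid cost bounds for our $q = 1$ problem: the iterations drive the scaled state cost to $q_i = \beta^{-2i} \to \infty$, and the $\beta^{-2i}$ blow-up in $p_i$ must cancel exactly against the $\beta^{2i}$ decay coming from $b \leq \beta^i(1+\tau)$ in the $J^\star$ lower bound. The boundary case $\alpha = (2a+1)/(2a)$ is a secondary subtlety, since the displayed bound attains $(2a+1)/(2a)$ only in the degenerate limit $\tau \downarrow 0$; the slack in the strict inequalities of Lemma~\ref{lem:scalar-cost-ub} should suffice to handle this case by working with the exact Riccati solution rather than the crude estimate.
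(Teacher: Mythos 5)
Your proof is correct and follows essentially the same route as the paper's: a base GCC controller for an interval at the top of $\Bset$, rescaled geometrically via \lemmaref{lem:scalar-cover-recursion}, with the suboptimality ratio made scale-invariant by playing the $\beta^{-2i}$ growth of the GCC certificate against the $2a/b^2$ lower bound of \lemmaref{lem:scalar-cost-ub}. The only substantive difference is that you compute the base Riccati solution $p_0$ in closed form (effectively instantiating the construction inside \lemmaref{lem:petersen-existence} explicitly, which makes $\tau$ and hence $\beta$ quantitative) where the paper invokes that existence lemma non-constructively, and you correctly flag and resolve the boundary case $\alpha = \frac{2a+1}{2a}$, which indeed goes through because $p_0 \to a + \sqrt{a^2+1} < 2a+1$ as $\tau \downarrow 0$.
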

\begin{proof}
	We construct a cover from the upper end of $\Bset$.
	By \lemmaref{lem:scalar-cost-ub},
	the condition ${\alpha \geq \frac{2a+1}{2a}}$ implies that
	\(
		J^\star_{b=1} < \alpha 2 a < \alpha J^\star_{b=1}.
	\)
	Therefore, by
	\lemmaref{lem:petersen-gcc,lem:petersen-existence},
	there exists $\beta \in (0, 1)$
	and
	$p, \tau, k$
	such that
	$p, \tau, k = \GCC(a, (1-\beta)/2, (1+\beta)/2, 1)$
	and
	$p \leq \alpha 2a$.

	Proceeding inductively,
	suppose that for $N \geq 1$,
	we have covered $[\beta^N, 1]$
	by the intervals $\Bset_n = [\beta^{n+1}, \beta^{n}]$
	for $n \in \{0, \dots, N-1\}$,
	and each $\Bset_n$ has a controller $k_n$ such that 
	\[
		\beta^{-2n}p, \tau, k_n = \GCC \left(
			a,
			\nofrac{(\beta^{n} - \beta^{n+1})}{2},
			\nofrac{(\beta^{n} + \beta^{n+1})}{2},
			\beta^{-2n}
		\right).
	\]
	Then the existence of the desired $\Bset_N, k_N$
	follows immediately from
	\lemmaref{lem:scalar-cover-recursion}.

	By \lemmaref{lem:lancaster-ARE-domination},
	for each $\Bset_n$
	the GCC state cost $q_n = \beta^{-2n} \geq 1$
	is an upper bound on the cost if we replace $q_n$ with $1$
	to match the \DDFproblem.
	Therefore,
	for each interval $\Bset_n$,
	for all $b \in \Bset_n$,
	\[
		\alpha J^\star_{b}
		\geq
		\alpha J^\star_{\beta^n}
		>
		\beta^{-2n} \alpha 2 a 
		\geq
		\beta^{-2n} p
		\geq
		J_b(k_n),
	\]
	where first inequality is due to \lemmaref{lem:lancaster-ARE-domination},
	the second is due to \lemmaref{lem:scalar-cost-ub},
	the third is by construction of $p$,
	and last is due to the GCC guarantee of $k_n$.
	Hence, $\Bset_n \subseteq \suboptneighb{\alpha}{k_n}$.
	We cover the full $\Bset$ when
	$\beta^{N} \leq \recip \theta$,
	which is satisfied by $N \geq - \log \theta / \log \beta$.
\end{proof} 

\subsection{Scalar lower bound}

For the matching lower bound,
we begin by
deriving a simplified overestimate of $\suboptneighb{\alpha}{k}$.
We then show that the true $\suboptneighb{\alpha}{k}$
is still a closed interval moving monotonically with $k$.
Finally, we argue that
the gaps between consecutive elements of a cover grow at most geometrically,
while the range of $k$ values in a cover must grow linearly with $\breadth$.

\begin{lemma}
\label{lem:scalar-neighborhood-optimistic}
	For a scalar \DDFproblem\ with $a \geq 1,\ \Bset = [\recip \breadth, 1]$,
	for any $k < 0$,
	if $\alpha \geq 3/2$, then
	$\suboptneighb{\alpha}{k} \subseteq \recip{|k|} [c_1 \!- \!c_2,\ c_1\! + \!c_2]$,
	where $c_1$ and $c_2$ are constants depending on $\alpha$ and $a$.
\end{lemma}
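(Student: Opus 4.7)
The plan is to write out $J_b(k)$ and $J^\star_b$ explicitly, use the upper bound from \lemmaref{lem:scalar-cost-ub} to replace the suboptimality condition with something tractable, and reduce it to a quadratic inequality in the ``effective gain'' $\tilde b := b|k|$. The interval bounding $\suboptneighb{\alpha}{k}$ then falls out as the gap between the roots.

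First, solving the scalar Lyapunov equation $2(a+bk)w + 1 = 0$ gives $J_b(k) = (1+k^2)/(2(b|k|-a))$, finite precisely when $b|k| > a$ (the stability condition, using $k<0$ and $b>0$). For $b \in \Bset$ we have $b \leq 1$, so \lemmaref{lem:scalar-cost-ub} gives $J^\star_b < (2a+1)/b^2$. Any $b \in \suboptneighb{\alpha}{k}$ must therefore satisfy
\[
    \frac{1 + k^2}{2(b|k|-a)} \leq \alpha \cdot \frac{2a+1}{b^2}, \qquad b|k| > a,
\]
which, after clearing denominators and substituting $\tilde b = b|k|$, becomes $\tilde b^2(1 + 1/k^2) \leq 2\alpha(2a+1)(\tilde b - a)$. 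Dropping the factor $1 + 1/k^2 \geq 1$ yields a quadratic in $\tilde b$ alone whose roots are $c_1 \pm c_2$ with $c_1 = \alpha(2a+1)$ and $c_2 = \sqrt{\alpha(2a+1)(\alpha(2a+1) - 2a)}$; thus $\tilde b$, and hence $b = \tilde b / |k|$, lies in the claimed interval.

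Two small checks will remain. First, the hypotheses $\alpha \geq 3/2$ and $a \geq 1$ must make the discriminant positive so $c_2$ is real and nonzero; this is a one-line computation using $\alpha(2a+1) - 2a \geq \tfrac{3}{2}(2a+1) - 2a = a + \tfrac{3}{2} > 0$. Second, the stability constraint $\tilde b > a$ should be automatic from the quadratic inequality, since otherwise its right-hand side is nonpositive while its left-hand side is nonnegative. The main---rather modest---obstacle is noticing that rescaling by $|k|$ is the right move; once $\tilde b = b|k|$ is introduced, the explicit $k$-dependence essentially disappears and the remainder reduces to routine algebra. I do not anticipate needing any subtle continuity or domination arguments here, because the scalar Lyapunov equation admits a fully explicit solution.
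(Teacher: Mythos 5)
Your proposal is correct and follows essentially the same route as the paper's proof: relax the membership condition using the closed form $J_b(k)=\frac{1+k^2}{2(b|k|-a)}$ together with the upper bound on $J^\star_b$ from Lemma~\ref{lem:scalar-cost-ub}, discard the slack between $1+k^2$ and $k^2$, and read off an interval for $b|k|$ from the resulting quadratic, giving $c_1=\alpha(2a+1)$ and $c_2=\sqrt{\alpha(2a+1)\left(\alpha(2a+1)-2a\right)}$, whereas the paper additionally uses $a\geq 1$ to coarsen $2a+1$ to $3a$ and obtains $c_1=3a\alpha$, $c_2=a\sqrt{9\alpha^2-6\alpha}$. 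The only cosmetic difference is that you obtain the interval by completing the square in $\tilde b=b|k|$ rather than via the paper's convexity-in-$b$ argument for the relaxed ratio; both are sound, and your constants still satisfy $c_1>c_2$ as needed downstream in Theorem~\ref{thm:scalar-lb}.
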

\begin{proof}
	Beginning with the closed-form solution for $J_b(k)$,
	which can be derived from \eqref{eq:cost-suboptimal},
	we define
	\begin{equation}
	\label{eq:Jbk}
		J_b(k) = \frac{1 + k^2}{-2(a + bk)}
		\geq \frac{k^2}{-2(a + bk)}
		\defeq \underline{J_b}(k)
		.
	\end{equation}
	By \lemmaref{lem:scalar-cost-ub}, we have
	\(
		J^\star_b < \nofrac{3a}{b^2} \defeq \overline{J^\star_b}
	\),
	so $\ratiolb = \nofrac{\underline{J_b}(k)}{\,\overline{J^\star_b}}$
	is a lower bound on the suboptimality of $k$.
	Computing $\partial^2 \ratiolb / \partial b^2$
	shows that $\ratiolb$ is strictly convex in $b$ on the domain $a + bk < 0$,
	so the $\alpha$-sublevel set of $\ratiolb$ is the closed interval
	with boundaries where $\ratiolb = \alpha$.
	This equation is quadratic in $b$ with the solutions
	\(
		b = -a(3 \alpha \pm \sqrt{9 \alpha^2 - 6 \alpha})/k
	\).
	The resulting interval contains $\suboptneighb{\alpha}{k}$.
\end{proof}

\begin{restatable}{lemma2}{quasi}
\label{lem:subopt-convex}
	For a scalar \DDFproblem, if $\alpha > 1$
	and $k < -1$,
	then $\suboptneighb{\alpha}{k}$ is either empty or a closed interval
	$[b_1, b_2]$,
	with $b_1$ and $b_2$ positive and nondecreasing in $k$.
\end{restatable}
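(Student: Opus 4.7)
The plan is to study the suboptimality ratio $r(b,k) = J_b(k)/J^\star_b$ directly on the stability region $b > a/|k|$. I first show it is strictly quasiconvex in $b$, which gives interval structure in $\Bset$, and then use implicit differentiation to track how the $\alpha$-sublevel-set endpoints depend on $k$.

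\emph{Step 1 (interval structure in $b$).} Combining the closed form in \eqref{eq:Jbk} with the scalar Riccati solution $J^\star_b = (a+\sqrt{a^2+b^2})/b^2$ from the proof of \lemmaref{lem:scalar-cost-ub} gives
\[
   r(b,k) \;=\; \frac{(1+k^2)\,b^2}{-2(a+bk)\,(a+\sqrt{a^2+b^2})},
\]
defined on $b > a/|k|$. A short simplification yields
\[
   \partial_b \log r(b,k) \;=\; \frac{a}{b}\cdot\frac{-(a+bk)\,-\,\sqrt{a^2+b^2}}{-(a+bk)\,\sqrt{a^2+b^2}},
\]
whose denominator is positive on the stability region, whose numerator equals $-\sqrt{a^2+b^2} < 0$ at $b = a/|k|$ and grows like $b(|k|-1) > 0$ as $b \to \infty$ (using $|k|>1$), and whose unique zero is $b^\star(k) = 2a|k|/(k^2-1)$. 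Hence $r(\cdot,k)$ strictly decreases on $(a/|k|,\,b^\star(k)]$ and strictly increases on $[b^\star(k),\infty)$, so its $\alpha$-sublevel set on the stability region is a closed interval $[\beta_1(k),\,\beta_2(k)]$ with $\beta_1(k) > a/|k|$ (taking $\beta_2(k) = \infty$ when the asymptote $(1+k^2)/(2|k|) \le \alpha$). Intersecting with $\Bset = [1/\breadth,\,1]$ yields $\suboptneighb{\alpha}{k}$, which is either empty or a closed interval $[b_1,\,b_2]$ with $b_1 = \max(1/\breadth,\,\beta_1) > 0$ and $b_2 = \min(1,\,\beta_2)$.

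\emph{Step 2 (monotonicity in $k$).} The key observation is that $b^\star(k)$ is exactly the inverse of the optimal-controller map $k^\star(b) = -(a+\sqrt{a^2+b^2})/b$, which a short calculation shows is strictly increasing in $b$. Consequently, for any $b$ in the stability region, $b < b^\star(k)$ iff $k > k^\star(b)$, and $b > b^\star(k)$ iff $k < k^\star(b)$. Since $r(b,\cdot)$ attains its minimum value $1$ at $k = k^\star(b)$, the sign of $\partial_k r(b,k)$ matches the sign of $k - k^\star(b)$. Now apply implicit differentiation to $r(\beta_i(k),k) = \alpha$: $d\beta_i/dk = -\partial_k r/\partial_b r$. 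At $\beta_1(k) < b^\star(k)$ we have $\partial_b r < 0$ and $\partial_k r > 0$, so $d\beta_1/dk > 0$. At $\beta_2(k) > b^\star(k)$ both signs flip, again yielding $d\beta_2/dk > 0$. Therefore $\beta_1,\beta_2$ are strictly increasing in $k$, and the truncated endpoints $b_1 = \max(1/\breadth,\beta_1)$, $b_2 = \min(1,\beta_2)$ are nondecreasing in $k$.

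\emph{Main obstacle.} The main bookkeeping challenge is confirming the signs of $\partial_b r$ and $\partial_k r$ on each side of $b^\star(k)$. This is substantially simplified by recognizing that $b^\star(k)$ and $k^\star(b)$ are mutual inverses, which replaces a two-variable analysis of $\partial_k r$ with the single comparison of $k$ against $k^\star(b)$. Boundary cases where an endpoint equals $1/\breadth$ or $1$, or where $\beta_2 = \infty$, are handled uniformly by the max/min truncation, which preserves monotonicity.
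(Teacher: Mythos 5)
Your route is essentially the paper's: show that for fixed $k<-1$ the ratio $r(b,k)=J_b(k)/J^\star_b$ is unimodal in $b$ with unique interior minimizer $b^\star(k)=2ak/(1-k^2)$ (the inverse of the optimal-gain map $b\mapsto k^\star_b$), read off the signs of $\partial_b r$ and $\partial_k r$ at the endpoints of the $\alpha$-sublevel set, apply implicit differentiation to get monotonicity of those endpoints in $k$, and finish by intersecting with $\Bset=[\recip\breadth,1]$. Your Step 1 is in fact tidier than the paper's: the factorized expression for $\partial_b \log r$ gives the strict decrease/increase pattern directly, whereas the paper establishes quasiconvexity in $b$ through a lengthier second-derivative test at the stationary point.

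Two steps are thinner than they need to be. First, you infer that ``the sign of $\partial_k r(b,k)$ matches the sign of $k-k^\star(b)$'' solely from the fact that $r(b,\cdot)$ attains its minimum at $k^\star(b)$; a minimum alone does not control the sign of the derivative away from it. You need unimodality of $J_b(\cdot)$ in $k$ --- the paper proves this as a separate quasiconvexity lemma via the convex-over-concave quotient rule --- or, more directly in your style, note that $\partial_k J_b(k)$ has numerator $b(1-k^2)-2ak$, a concave quadratic in $k$ whose only root in the stable region $k<-a/b$ is $k^\star(b)$, which yields the claimed sign pattern. Second, the truncation $b_2=\min\{1,\beta_2\}$ is nondecreasing in $k$ only if $\beta_2$ (with the convention $\beta_2=\infty$) is nondecreasing across the transition between the bounded and unbounded regimes: if the sublevel set were unbounded at some $k$ but bounded with $\beta_2<1$ at some larger $k'$, then $b_2$ would drop. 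This cannot happen because the asymptotic value $(1+k^2)/(2|k|)$ is strictly decreasing in $k$ on $k<-1$, so unboundedness persists as $k$ increases; the paper proves exactly this in its unbounded case, and your write-up should include it (it is one line given that you already computed the asymptote). With these two additions your argument is complete and coincides with the paper's proof up to the cleaner derivative computation.
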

\begin{proof}
The result follows from quasiconvexity of
both the suboptimality ratio $J_b(k) / J^\star_b$
and the cost $J_b(k)$.
Showing these requires some tedious calculations.
\ifextended{
	For details, see \Cref{sec:appendix-theory}.
}{
	For details, see the extended version of this paper.
}
\end{proof}

\begin{theorem}
\label{thm:scalar-lb}
	For a scalar \DDFproblem\ with
	$a = 1,\ \Bset = [\recip \breadth, 1]$,
	if $\alpha \geq \frac 3 2$, %
	then
	$\coveringnum{\alpha}{\Bset} = \Omega(\log \breadth)$.
\end{theorem}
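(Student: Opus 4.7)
The plan is to use the bounded multiplicative width of each suboptimal neighborhood from \Cref{lem:scalar-neighborhood-optimistic}, together with the interval structure from \Cref{lem:subopt-convex}, to show that consecutive $k$-values in any cover differ by at most a constant factor while the total range of $|k|$ must grow linearly with $\breadth$, forcing $\Omega(\log \breadth)$ cover elements.

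First, I would restrict attention to $k < -1$. The scalar closed loop $\dot x = (1 + bk)x$ is stable iff $|k| > 1/b$, and since $b \leq 1$ throughout $\Bset$, any $k \in [-1, 0)$ stabilizes no $b \in \Bset$ and has $\suboptneighb{\alpha}{k} = \emptyset$. (Nonnegative $k$ are likewise unstabilizing.) Hence any $\alpha$-cover may be assumed to lie entirely in $k < -1$, where \Cref{lem:subopt-convex} applies.

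Next, let $\coveringset = \{k_1, \ldots, k_N\}$ be an $\alpha$-cover, sorted so that $k_1 < \cdots < k_N < -1$, equivalently $|k_1| > \cdots > |k_N|$. By \Cref{lem:subopt-convex}, each $\suboptneighb{\alpha}{k_i}$ is a closed interval $[b_1(k_i), b_2(k_i)]$ with both endpoints nondecreasing in $k_i$, so the intervals appear in increasing $b$-order in this sort. By \Cref{lem:scalar-neighborhood-optimistic}, every $b \in \suboptneighb{\alpha}{k_i}$ satisfies $|k_i|\, b \in [c_1 - c_2,\, c_1 + c_2]$, so $b_1(k_i) \geq (c_1 - c_2)/|k_i|$ and $b_2(k_i) \leq (c_1 + c_2)/|k_i|$. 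Since the union of the intervals covers $\Bset$, consecutive intervals in this sort must overlap: $b_1(k_{i+1}) \leq b_2(k_i)$. Combining the endpoint bounds yields $|k_i|/|k_{i+1}| \leq r$, where $r := (c_1 + c_2)/(c_1 - c_2)$ is a constant depending only on $\alpha$.

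Telescoping these ratios gives $|k_1|/|k_N| \leq r^{N-1}$. On the other hand, the extremes of $\Bset$ must also be covered: $\recip \breadth \in \suboptneighb{\alpha}{k_1}$ forces $|k_1| \geq (c_1 - c_2)\breadth$, while $1 \in \suboptneighb{\alpha}{k_N}$ forces $|k_N| \leq c_1 + c_2$, so $|k_1|/|k_N| \geq \breadth/r$. Chaining the two inequalities gives $\breadth/r \leq r^{N-1}$, hence $N \geq 1 + \log_r(\breadth/r) = \Omega(\log \breadth)$.

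The main obstacle is the overlap step: justifying that consecutive cover intervals actually overlap when sorted by $k$. This relies crucially on \Cref{lem:subopt-convex} ensuring that each neighborhood is a single closed interval with monotonically moving endpoints; without this structure, a minimal cover could have complex topology (for example, a small interval nested inside what appears to be a gap), which would complicate the telescoping argument and require a more delicate sum-of-lengths style bound in $\log b$ coordinates.
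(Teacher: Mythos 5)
Your proposal is correct and follows essentially the same route as the paper: it combines the interval structure and monotone endpoints from \lemmaref{lem:subopt-convex} with the $\recip{|k|}[c_1-c_2,\,c_1+c_2]$ overestimate from \lemmaref{lem:scalar-neighborhood-optimistic}, forces consecutive neighborhoods to overlap, telescopes the resulting constant ratio bound on consecutive $|k_i|$, and pins the extremes of the cover at $b=1$ and $b=\recip\breadth$ to get $N = \Omega(\log\breadth)$. The only (harmless) deviation is that you discard controllers with $k \geq -1$ by a direct stabilizability argument (empty neighborhoods when $a=1$, $b\le 1$), whereas the paper invokes $k^\star_b < -1$ together with quasiconvexity of $J_b(k)$ to restrict a minimal cover to $k < -1$; both are fine.
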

\begin{proof}
	From the closed-form solution
	\(
		k^\star_{a,b} = -(a + \sqrt{a^2 + b^2})/b
	\),
	we observe that $k^\star_b < -1$ for all $b \in \Bset$.
	This, along with the quasiconvexity of $J_b(k)$ in $k$,
	implies that there exists a minimal $\alpha$-suboptimal cover $\coveringset$ for which all $k_i < -1$.
	Suppose $\coveringset = k_1, \dots, k_N$ is such a cover, ordered such that $k_i < k_{i+1}$.
	Then
	by \lemmaref{lem:subopt-convex},
	$\suboptneighb{\alpha}{k_i}$
	and
	$\suboptneighb{\alpha}{k_{i+1}}$
	must intersect,
	so their overestimates according to
	\lemmaref{lem:scalar-neighborhood-optimistic}
	certainly intersect,
	therefore satisfying
	\[
		\frac{c_1 + c_2}{-k_{i+1}}
		\geq
		\frac{c_1 - c_2}{-k_i}
		\implies
		\frac{k_{i+1}}{k_i} \leq \frac{c_1 + c_2}{c_1 - c_2}
		\implies
		\frac{k_N}{k_1} \leq \left(\frac{c_1 + c_2}{c_1 - c_2}\right)^{N-1}.
	\]
	By \lemmaref{lem:scalar-neighborhood-optimistic},
	to cover $b = 1$
	controller $k_1$ must satisfy
	$k_1 \geq -(c_1 + c_2)$,
	and to cover $b = \recip \breadth$,
	controller $k_N$ must satisfy
	$k_N \leq -\breadth(c_1 - c_2)$.
	Along with the previous result, this implies
	\[
		\left(\frac{c_1 + c_2}{c_1 - c_2}\right)^{N-1}
		\geq
		\breadth
		\frac{c_1 - c_2}{c_1 + c_2}
		\implies
		N \geq \frac{\log \breadth}{\log \frac{c_1 + c_2}{c_1 - c_2}}.
	\]
	Recalling that $c_1$ and $c_2$ only depend on $a$ and $\alpha$,
	the $\Omega(\log \breadth)$ dependence on $\breadth$ is established.
\end{proof}

\paragraph{Remarks}
\begin{itemize}
	\item For the upper bound, it may be possible to compute or bound $\beta$ in the scalar case
		as a function of $a$ and $\alpha$,
		but the analogous result will likely be much more complicated in the matrix case.
	\item \theoremref{thm:covering-scalar} imposes a lower bound on $\alpha$ greater than $1$.
		We believe this is a mild condition in practice:
		if the application demands a suboptimality ratio very close to 1,
		then the size of the suboptimal cover is likely to become impractical for storage.
		However, further theoretical results building upon suboptimal coverings may require eliminating the bound.
\end{itemize}

\section{Empirical results}
\label{sec:grid}

For matrix \DDFproblems,
we present empirical results as a first step towards covering number bounds. %
We begin by testing a cover construction. %
If the construction fails to achieve a conjectured upper bound in a numerical experiment,
then either the conjecture is false,
or the construction is not efficient.
A natural idea is to extend
the geometrically spaced sequence of $b$ values from
\lemmaref{lem:scalar-cost-ub}
to multiple dimensions.
We now make this notion, illustrated in \Cref{fig:geomgrid}, precise.

\begin{definition}[Geometric grid partition]
\label{def:geomgrid}
	Given a \DDFproblem\ with $\bSigma = [\recip \breadth, 1]^d$,
	and a grid pitch $k \in \N_+$,
	select
	$s_1, \dots, s_{k+1}$
	such that
	$s_1 = \recip \breadth$,
	$s_{k + 1} = 1$,
	and $\frac{s_{i+1}}{s_i} > 0$
	is constant.
	For each $j \in [k]^d$, define the grid cell
	\(
		\bSigma(j) = \textstyle \prod_{i=1}^d [s_{j(i)}, s_{j(i) + 1}],
	\)
	where $j(i)$ is the $i\th$ component of $j$.
	The cells satisfy
	\(
		\bSigma = \textstyle \bigcup_{j \in [k]^d} \bSigma(j),
	\)
	thus forming an partition (up to boundaries) of $\bSigma$ into $k^d$ cells.
\end{definition}

\begin{figure}
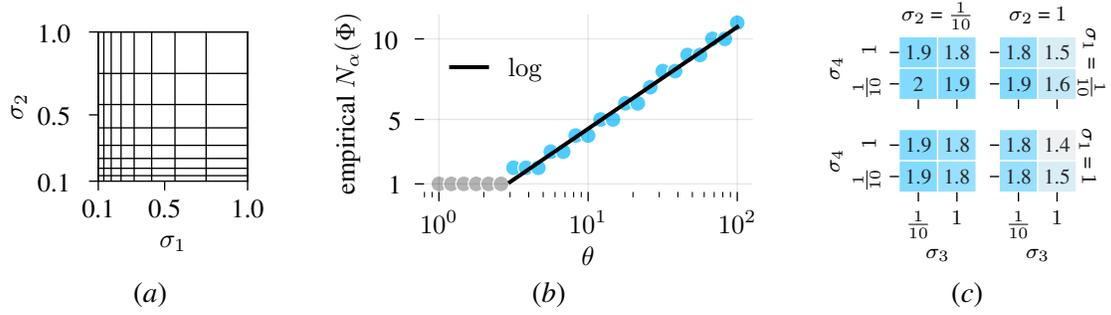
%
	\subfigure{
		\hspace{-5mm}
		\raisebox{0mm}{
			\inputpgf{figures}{geomgrid.pgf}
		}
		\label{fig:geomgrid}
	}%
	~\hfill
	\subfigure{%
		\raisebox{-2mm}{
			\inputpgf{figures}{covering_quadrotor.pgf}
		}
		\label{fig:quadrotor_covering}
	}%
	~\hfill
	\subfigure{%
		\raisebox{-1mm}{
			\inputpgf{figures}{covering_grid_ratios.pgf}
		}
		\label{fig:gridratios}
	}
	\caption{%
		Application of geometric grid cover to linearized quadrotor.
		(a) Illustration of geometric grid partition.
		(b) Empirical upper bound on covering number. %
		(c) Suboptimality ratios for corner cells in empirical cover.
		Discussion in \Cref{sec:grid}.
	}
\end{figure}

\paragraph{Empirical upper bound on $\coveringnum{\alpha}{\Envs}$.}
In this experiment, we construct an $\alpha$-suboptimal cover
$\coveringset$
using geometric grids,
such that each $K \in \coveringset$
is $\alpha$-suboptimal for a full grid cell.
For each cell $\bSigma(j)$, we attempt GCC synthesis.
If it succeeds,
we check if $\bSigma(j) \subseteq \suboptneighb{\alpha}{K(j)}$.
If not, we increment the grid pitch $k$ and try again.
Termination is guaranteed by continuity.
We show results for %
the linearized quadrotor %
with $\alpha = 2$ in
\Cref{fig:quadrotor_covering}.
The data follow roughly logarithmic growth,
as indicated by the linear least-squares best-fit curve in black.
Small values of $\breadth$ are excluded from the fit (indicated by grey points),
as we do not expect the asymptotic growth pattern to appear yet.

These results do not rule out
the $\log(\theta)^d$ growth suggested by
the geometric grid construction.
Testing larger values of $\breadth$
is computationally difficult because the number of grid cells becomes huge
and the GCC Riccati equation becomes numerically unstable for very small $\Sigma$.

\paragraph{Efficiency of geometric grid partition.}
\label{sec:are-geometric-efficient}

Given an $\alpha$-suboptimal geometric grid cover,
we examine a measurable quantity that may reflect the ``efficiency'' of the cover.
Intuitively,
in a good cover we expect 
the suboptimality ratio of each controller $K(j)$ relative to its grid cell $\bSigma(j)$
to be close to $\alpha$.
If it close to $\alpha$ for some cells but significantly less than $\alpha$ for others,
then the grid pitch around the latter cells is finer than necessary.
We visualize results for this computation on the linearized quadrotor
with $\theta = 10,\ k = 4$
in \Cref{fig:gridratios}
---
only the corners of the $4 \times 4 \times 4 \times 4$ grid are shown.
The suboptimality ratio is close to $\alpha = 2$
for cells with low control authority (near $\Sigma = \recip \theta I$),
but drops to around $1.4$
for cells with high control authority (near $\Sigma = I$).
The difference suggests that the geometric grid cover could be more efficient in the high-authority regime.

\paragraph{Efficiency of GCC synthesis.}
One possible source of conservativeness is that
\lemmaref{lem:petersen-gcc} applies to the affine image of
a $m \times n$-dimensional matrix norm ball,
but we only require guaranteed cost on
a $d$-dimensional affine subspace of diagonal matrices.
In other words, we ask GCC synthesis to ensure $\alpha$-suboptimality on systems that are not actually part of $\Envs$.
If this is negatively affecting the result,
then we should observe that the worst-case cost of $K(j)$ on $\bSigma(j)$
is less than the trace of the solution $P$ for the GCC Riccati equation \eqref{eq:petersen-riccati}.
The worst-case cost always occurs at the minimal $\Sigma \in \bSigma(j)$ by \lemmaref{lem:lancaster-ARE-domination};
we evaluate it with \eqref{eq:cost-suboptimal}.
For the quadrotor,
a mismatch sometimes occurs for smaller values of $\theta$,
but it does not occur for the large values of $\theta$.

\subsection{Suboptimal neighborhood visualizations}
\label{sec:matrix-lb-ideas}

We now present intuition-building experiments towards a covering number lower bound
for matrix \DDFproblems.
A lower bound requires
a class of \DDFproblem\
that can be instantiated for any dimensionality $d$.
Two choices come to mind:
\emph{minimum coupling},
where $A = \textstyle I$,
and \emph{maximum coupling},
where $A = \textstyle \recip n \one$.
Note that for minimum coupling,
an $\alpha$-suboptimal policy
is not necessarily $\alpha$-suboptimal on each scalar subsystem---%
if it were, the lower bound $\log (\theta)^d$ would trivially follow
from the results in \Cref{sec:results-scalar}.

\label{sec:2d-visualization}

\begin{figure}[t]
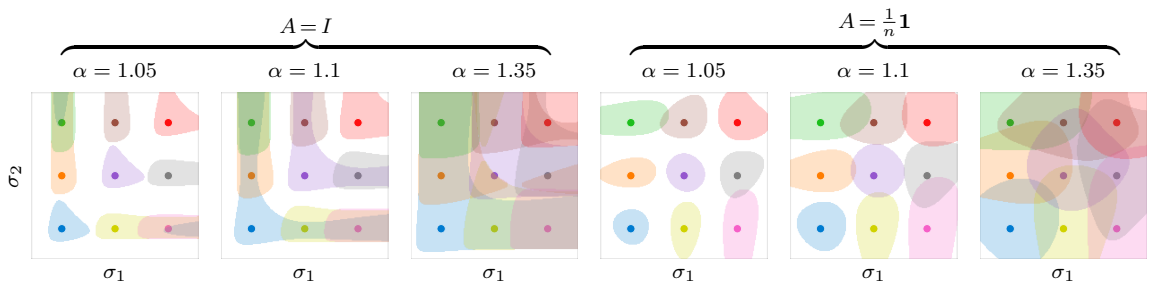

	\(
		\hspace{7mm}
		\overbrace{\hspace{65mm}}^{A \mathop{=} I}
		\hspace{10mm}
		\overbrace{\hspace{65mm}}^{A \mathop{=} \recip n \one}
	\)
	\adjustbox{clip,trim=4.5mm 1mm 0 4.5mm}{
		\inputpgf{figures}{neighborhoods_2x2.pgf}
	}
	\vspace{-2mm}
	\caption{
		$\alpha$-suboptimal neighborhoods
		for geometric grid partition in 2D system.
		\emph{Left:}
		minimum coupling; $A = I$.
		\emph{Right:}
		maximum coupling; $A = \recip n \one$.
		\emph{Columns:} varying suboptimality threshold $\alpha$.
		All axes are logarithmic. %
		Colors have no meaning.
		Discussion in \Cref{sec:2d-visualization}.
	}
	\label{fig:neighborhoods-2x2}
\end{figure}

We show approximate suboptimal neighborhoods
for a two-dimensional system in \Cref{fig:neighborhoods-2x2}.
We select a geometric grid of $\Sigma$ values (indicated by the circular markers)
and synthesize their LQR-optimal controllers.
Then, we evaluate the suboptimality ratio of each controller on a finer grid of $\Sigma$ values
to get approximate neighborhoods, indicated by the semi-transparent regions.
We repeat this experiment
with three values of $\alpha$
for both choices of $A$.

Interestingly, the neighborhoods for $A=I$ are not always connected.
In the plot for ${\alpha = 1.05}$ (far left),
the neighborhood for the minimal $\Sigma$
has another component that overlaps other neighborhoods
to its top and right.
If we increase to $\alpha = 1.1$,
the components join
into an ``L''-shaped region.
In contrast, the neighborhoods for $A = \recip n \one$ seem more well-behaved.
For both choices of $A$,
the neighborhoods are of comparable size.

To verify that this behavior %
is not an artifact of the two-dimensional case only,
we repeat the experiment in three dimensions.
\Cref{fig:neighborhoods-3x3} shows neighborhoods
of one controller $K = \Kopt{(2/\theta) I}$
for $\alpha$ ranging from $1.04$ to $1.2$.
As $\alpha$ grows, $\suboptneighb{\alpha}{K}$ shows similar topological phases as the $2$D case.
In the simply-connected phase (large $\alpha$),
the neighborhood appears to include
any $\Sigma$ where at least one $\sigma_i$ is sufficiently small.
If this property holds in higher dimensions,
then it would be possible
to construct a cover using only controllers of uniform gain in all dimensions
for large $\alpha$.

\begin{figure}[t]
	\hfill
	\foreach \alpha in {1.0367,1.0526,1.0852,1.1137,1.2000}
	{
		\includegraphics[width=0.17\textwidth,trim={60px 130px 80px 160px},clip]{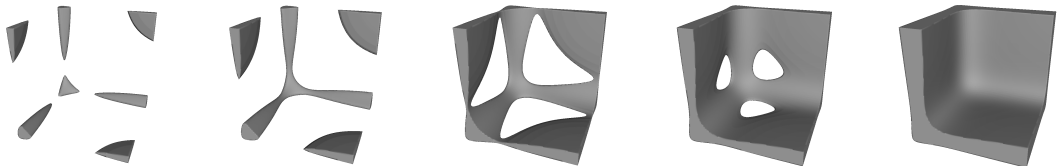}
		\hfill
	}
	\caption{%
		$\alpha$-suboptimal neighborhoods for the three-dimensional decomposed dynamics system with
		minimal coupling
		($A = U = V^\top = I_{3 \times 3}$)
		and breadth $\breadth = 100$.
		Neighborhoods shown for
		$\alpha$ ranging from $1.04$ to $1.2$
		with a fixed
		controller.
	}
	\label{fig:neighborhoods-3x3}
\end{figure}

\section{Related work}
\label{sec:related}

Suboptimal coverings are closely related to several topics in control theory.
Robust control synthesis under parametric uncertainty
\citep{dullerud-robust}
can be interpreted as seeking a policy that performs well on all of $\Envs$
without observing the particular ${\env \in \Envs}$.
Most problem statements in robust synthesis admit problem instances with no solution; the goal is to find a robust policy \emph{if} one exists.
Adaptive control is also concerned with sets of control tasks,
with the added complication that $\env$ is not known to the policy.
Adaptive policies
of the self-tuning type
synthesize a \singleenv\ policy after estimating $\env$,
but this relies on the assumption that control synthesis can be computed quickly
\citep{astrom-adaptive}.

Adaptive and gain-scheduled multi-model methods use a precomputed set of policies instead
\citep{murray-smith-multimodel},
but researchers have focused more on the switching rule than the policy set.
For example,
\citet{fu-adaptive-switching,stilwell-interpolation,yoon-gain-scheduling-minimax}
non-constructively assert the existence of a finite cover
by continuity and compactness arguments.
To address the need for small covers,
\citet{anderson-coverings,mcnichols-selecting-discrete,tan-selecting,fekri-issues,du-multimodel-integrated}
propose constructive algorithms, %
sometimes with arguments for minimality,
but without size bounds on the cover.
\citet{jalali-multimodel-vinnicombe}
show an upper bound in terms of frequency-domain properties of $\Envs$,
as opposed to state-space parameters like mass and geometry.
The most closely related work to ours is from \citet{fu-minimum-switching},
who shows a tight bound of $2^n$
for the stability covering number
of a relatively broad $\Envs$. %
This result is complementary to ours:
suboptimality is a stronger criterion than stability,
but our class of $\Envs$ is more restrictive.
We are not aware of prior work that bounds covering numbers in a setup based on local suboptimality,
as opposed to a single global performance measure.

\Multienv\ control is also a popular topic in deep learning research,
where it is often motivated by ideas of lifelong skill acquisition in robotics.
Domain randomization methods
follow the spirit of robust control
\citep{peng-dynamics-randomization-corr17},
but usually optimize for the average case instead of a worst-case guarantee.
Many methods where the policy observes $\env$
use architectural constructs that can only be applied to finite \envword\ sets
\citep{yang-multitask,parisotto-actormimic,devin-modular}.
A common approach for infinite \envword\ spaces is to treat $\env$ as a vector input alongside the system state.
\citet{yu-up-osi-rss17,chen-hardware-conditioned}
use this approach for dynamics parameters;
\citet{schaul-uvfa} use it for navigation goals.
There is evidence that policy class influences these methods:
in a recent benchmark %
\citep{yu-metaworld},
the concatenated-input architecture
trails the finite-task architecture.
Other investigations into the difficulty of learning policies for \multienv\ control include
methods to condition the \multienv\ optimization landscape
\citep{yu-pcgrad}
or balance disparate cost ranges
\citep{hasselt-popart}.

\section{Conclusion and future work}
In this paper, we introduced and motivated the
$\alpha$-suboptimal covering number
to quantify infinite \envword\ spaces
for \multienv\ control problems.
We defined a particular class of \multienv\ linear-quadratic regulator problems
amenable to analysis of the $\alpha$-suboptimal covering number,
and showed logarithmic dependency on the problem ``breadth'' parameter $\breadth$ in the scalar case.
Towards analogous results for the matrix case,
we presented empirical studies intended to shed light on possible proof techniques.
For the upper bound, we considered a natural covering construction that would preserve logarithmic dependence on $\breadth$
but give exponential dependence on dimensionality.
Experiments did not rule out its validity.
For the lower bound, we visualized suboptimal neighborhoods for two possible system classes
and observed interesting topological behavior for the minimal-coupling class.

After extending our current results to the matrix case,
in future work the analysis can be applied to other classes of \multienv\ LQR problems
including variations in $A, Q, R$,
discrete time, and stochastic dynamics.
It will be interesting to see if there are major differences between LQR variants.
We also hope that suboptimal covers and covering numbers will be a useful tool
for analyzing how the size of the \envword\ space
affects the required expressiveness of
function classes used in practice as \multienv\ policies,
such as neural networks.

\newpage
\bibliography{math, lqr, deep, rl}

\begin{thebibliography}{29}
\providecommand{\natexlab}[1]{#1}
\providecommand{\url}[1]{\texttt{#1}}
\expandafter\ifx\csname urlstyle\endcsname\relax
  \providecommand{\doi}[1]{doi: #1}\else
  \providecommand{\doi}{doi: \begingroup \urlstyle{rm}\Url}\fi

\bibitem[Anderson et~al.(2000)Anderson, Brinsmead, De~Bruyne, Hespanha,
  Liberzon, and Morse]{anderson-coverings}
Brian D.~O. Anderson, Thomas~S. Brinsmead, Franky De~Bruyne, Joao Hespanha,
  Daniel Liberzon, and A.~Stephen Morse.
\newblock Multiple model adaptive control. part 1: Finite controller coverings.
\newblock \emph{International Journal of Robust and Nonlinear Control},
  10\penalty0 (11-12):\penalty0 909--929, 2000.

\bibitem[{\AA}str{\"o}m and Wittenmark(2013)]{astrom-adaptive}
Karl~J {\AA}str{\"o}m and Bj{\"o}rn Wittenmark.
\newblock \emph{Adaptive Control}.
\newblock Courier Corporation, 2013.

\bibitem[Boyd and Vandenberghe(2004)]{boyd-convex}
Stephen Boyd and Lieven Vandenberghe.
\newblock \emph{Convex Optimization}.
\newblock Cambridge University Press, 2004.

\bibitem[Bu et~al.(2019)Bu, Mesbahi, and Mesbahi]{bu-topological-gains}
Jingjing Bu, Afshin Mesbahi, and Mehran Mesbahi.
\newblock On topological and metrical properties of stabilizing feedback gains:
  the {MIMO} case.
\newblock \emph{CoRR}, abs/1904.02737, 2019.

\bibitem[Chen et~al.(2018)Chen, Murali, and Gupta]{chen-hardware-conditioned}
Tao Chen, Adithyavairavan Murali, and Abhinav Gupta.
\newblock Hardware conditioned policies for multi-robot transfer learning.
\newblock In \emph{NeurIPS}, pages 9355--9366, 2018.

\bibitem[Devin et~al.(2017)Devin, Gupta, Darrell, Abbeel, and
  Levine]{devin-modular}
Coline Devin, Abhishek Gupta, Trevor Darrell, Pieter Abbeel, and Sergey Levine.
\newblock Learning modular neural network policies for multi-task and
  multi-robot transfer.
\newblock In \emph{IEEE International Conference on Robotics and Automation
  (ICRA)}, pages 2169--2176, 2017.

\bibitem[Du et~al.(2012)Du, Song, and Li]{du-multimodel-integrated}
Jingjing Du, Chunyue Song, and Ping Li.
\newblock Multimodel control of nonlinear systems: An integrated design
  procedure based on gap metric and {$H_\infty$} loop shaping.
\newblock \emph{Industrial \& Engineering Chemistry Research}, 51\penalty0
  (9):\penalty0 3722--3731, 2012.

\bibitem[Dullerud and Paganini(2000)]{dullerud-robust}
Geir~E. Dullerud and Fernando Paganini.
\newblock \emph{A Course in Robust Control Theory: A Convex Approach}.
\newblock Springer-Verlag New York, 2000.

\bibitem[Fazel et~al.(2018)Fazel, Ge, Kakade, and Mesbahi]{fazel-global-lqr}
Maryam Fazel, Rong Ge, Sham~M. Kakade, and Mehran Mesbahi.
\newblock Global convergence of policy gradient methods for linearized control
  problems.
\newblock \emph{CoRR}, abs/1801.05039, 2018.

\bibitem[Fekri et~al.(2006)Fekri, Athans, and Pascoal]{fekri-issues}
Sajjad Fekri, Michael Athans, and Antonio Pascoal.
\newblock Issues, progress and new results in robust adaptive control.
\newblock \emph{International Journal of Adaptive Control and Signal
  Processing}, 20\penalty0 (10):\penalty0 519--579, 2006.

\bibitem[Fu(1996)]{fu-minimum-switching}
Minyue Fu.
\newblock Minimum switching control for adaptive tracking.
\newblock In \emph{Proceedings of 35th IEEE Conference on Decision and
  Control}, volume~4, pages 3749--3754, 1996.

\bibitem[Fu and Barmish(1986)]{fu-adaptive-switching}
Minyue Fu and B.~Ross Barmish.
\newblock Adaptive stabilization of linear systems via switching control.
\newblock \emph{IEEE Transactions on Automatic Control}, 31\penalty0
  (12):\penalty0 1097--1103, 1986.

\bibitem[Jalali and Golmohammad(2012)]{jalali-multimodel-vinnicombe}
Ali~Akbar Jalali and Hassan Golmohammad.
\newblock An optimal multiple-model strategy to design a controller for
  nonlinear processes: A boiler-turbine unit.
\newblock \emph{Computers \& Chemical Engineering}, 46:\penalty0 48--58, 2012.

\bibitem[Kalman(1960)]{kalman-contributions}
R.~E. Kalman.
\newblock Contributions to the theory of optimal control.
\newblock \emph{Bolet\'in de la Sociedad Matem\'atica Mexicana}, 5:\penalty0
  102--199, 1960.

\bibitem[Lancaster and Rodman(1995)]{lancaster-AREs}
Peter Lancaster and Leiba Rodman.
\newblock \emph{Algebraic {Riccati} Equations}.
\newblock Clarendon Press, 1995.

\bibitem[McNichols and Fadali(2003)]{mcnichols-selecting-discrete}
Kenneth~H. McNichols and M.~Sami Fadali.
\newblock Selecting operating points for discrete-time gain scheduling.
\newblock \emph{Computers \& Electrical Engineering}, 29\penalty0 (2):\penalty0
  289--301, 2003.

\bibitem[Murray-Smith and Johansen(1997)]{murray-smith-multimodel}
Roderick Murray-Smith and Tor~Arne Johansen.
\newblock \emph{Multiple Model Approaches to Modelling and Control}.
\newblock Taylor and Francis, London, 1997.

\bibitem[Parisotto et~al.(2016)Parisotto, Ba, and
  Salakhutdinov]{parisotto-actormimic}
Emilio Parisotto, Lei~Jimmy Ba, and Ruslan Salakhutdinov.
\newblock Actor-mimic: Deep multitask and transfer reinforcement learning.
\newblock In \emph{International Conference on Learning Representations
  ({ICLR})}, 2016.

\bibitem[Peng et~al.(2017)Peng, Andrychowicz, Zaremba, and
  Abbeel]{peng-dynamics-randomization-corr17}
Xue~Bin Peng, Marcin Andrychowicz, Wojciech Zaremba, and Pieter Abbeel.
\newblock Sim-to-real transfer of robotic control with dynamics randomization.
\newblock \emph{CoRR}, abs/1710.06537, 2017.

\bibitem[Petersen and McFarlane(1994)]{petersen-guaranteed-cost}
Ian~R. Petersen and Duncan~C. McFarlane.
\newblock Optimal guaranteed cost control and filtering for uncertain linear
  systems.
\newblock \emph{IEEE Transactions on Automatic Control}, 39\penalty0
  (9):\penalty0 1971--1977, 1994.

\bibitem[Schaul et~al.(2015)Schaul, Horgan, Gregor, and Silver]{schaul-uvfa}
Tom Schaul, Daniel Horgan, Karol Gregor, and David Silver.
\newblock Universal value function approximators.
\newblock volume~37 of \emph{Proceedings of Machine Learning Research}, pages
  1312--1320, Lille, France, Jul 2015.

\bibitem[Stilwell and Rugh(1999)]{stilwell-interpolation}
Daniel~J. Stilwell and Wilson~J. Rugh.
\newblock Interpolation of observer state feedback controllers for gain
  scheduling.
\newblock \emph{IEEE Transactions on Automatic Control}, 44\penalty0
  (6):\penalty0 1225--1229, 1999.

\bibitem[Tan et~al.(2004)Tan, Marquez, and Chen]{tan-selecting}
Wen Tan, Horacio~J. Marquez, and Tongwen Chen.
\newblock Operating point selection in multimodel controller design.
\newblock In \emph{Proceedings of the 2004 American Control Conference},
  volume~4, pages 3652--3657, 2004.

\bibitem[van Hasselt et~al.(2016)van Hasselt, Guez, Hessel, Mnih, and
  Silver]{hasselt-popart}
Hado~P van Hasselt, Arthur Guez, Matteo Hessel, Volodymyr Mnih, and David
  Silver.
\newblock Learning values across many orders of magnitude.
\newblock In \emph{Advances in Neural Information Processing Systems}, pages
  4287--4295, 2016.

\bibitem[Yang et~al.(2017)Yang, Merrick, Abbass, and Jin]{yang-multitask}
Zhaoyang Yang, Kathryn~E Merrick, Hussein~A Abbass, and Lianwen Jin.
\newblock Multi-task deep reinforcement learning for continuous action control.
\newblock In \emph{IJCAI}, pages 3301--3307, 2017.

\bibitem[Yoon et~al.(2007)Yoon, Ugrinovskii, and
  Pszczel]{yoon-gain-scheduling-minimax}
Myung-Gon Yoon, Valery~A. Ugrinovskii, and Marek Pszczel.
\newblock Gain-scheduling of minimax optimal state-feedback controllers for
  uncertain {LPV} systems.
\newblock \emph{IEEE Transactions on Automatic Control}, 52\penalty0
  (2):\penalty0 311--317, 2007.

\bibitem[Yu et~al.(2019)Yu, Quillen, He, Julian, Hausman, Finn, and
  Levine]{yu-metaworld}
Tianhe Yu, Deirdre Quillen, Zhanpeng He, Ryan Julian, Karol Hausman, Chelsea
  Finn, and Sergey Levine.
\newblock Meta-world: {A} benchmark and evaluation for multi-task and meta
  reinforcement learning.
\newblock In \emph{CoRL}, volume 100 of \emph{Proceedings of Machine Learning
  Research}, pages 1094--1100. {PMLR}, 2019.

\bibitem[Yu et~al.(2020)Yu, Kumar, Gupta, Levine, Hausman, and Finn]{yu-pcgrad}
Tianhe Yu, Saurabh Kumar, Abhishek Gupta, Sergey Levine, Karol Hausman, and
  Chelsea Finn.
\newblock Gradient surgery for multi-task learning.
\newblock \emph{CoRR}, abs/2001.06782, 2020.

\bibitem[Yu et~al.(2017)Yu, Tan, Liu, and Turk]{yu-up-osi-rss17}
Wenhao Yu, Jie Tan, C.~Karen Liu, and Greg Turk.
\newblock Preparing for the unknown: Learning a universal policy with online
  system identification.
\newblock In \emph{Robotics: Science and Systems (RSS)}, 2017.

\end{thebibliography}

\ifextended{
   \newpage
   \appendix
   \section{Extended theoretical results}
   \label{sec:appendix-theory}

This appendix contains the proof for \lemmaref{lem:subopt-convex}.
We first present some supporting material.

\subsection{Supporting material}
We begin by recalling the definition and properties of quasiconvex functions on $\R$.
We then recall some basic facts about scalar LQR.

\begin{definition}
	A function $f : \cD \mapsto \R\ $ on the convex domain $\ \cD \subseteq \R^n$
	is \emph{quasiconvex} if its sublevel sets
	\(
		\cD_\alpha = \{ x \in \cD : f(x) \leq \alpha \}
	\)
	are convex for all $\alpha \in \R$.
\end{definition}

\begin{lemma}[\citet{boyd-convex}, \S 3.4]
\label{lem:quasiconvex}
	The following facts hold for quasiconvex functions on a convex $\cD \subseteq \R$:
	\begin{enumerate}[label=(\alph*)]
		\item \label{alternatives}
			If $f: \R \mapsto \R$ is continuous,
			then $f$ is quasiconvex
			if and only if at least one of the following conditions holds on $\cD$:
			\begin{enumerate}[label=\arabic*.]
				\item $f$ is nondecreasing.
				\item $f$ is nonincreasing.
				\item \label{alternative-incdec}
					There exists $c \in \cD$ such that
					for all $t \in \cD$,
					if $t < c$ then $f$ is nonincreasing,
					and if $t \geq c$ then $f$ is nondecreasing.
			\end{enumerate}
		\item \label{secondorder}
			If $f : \R \mapsto \R$ is twice differentiable
			and
			$\partial^2 f / \partial x^2 > 0$
			for all $x \in \cD$ where
			$\partial f / \partial x = 0$,
			then $f$ is quasiconvex on $\cD$.
		\item \label{quotient}
			If $f(x) = p(x) / q(x)$,
			where $p: \R \mapsto \R$ is convex with $p(x) \geq 0$ on $\cD$
			and $q: \R \mapsto \R$ is concave with $q(x) > 0$ on $\cD$,
			then $f$ is quasiconvex on $\cD$.
	\end{enumerate}
\end{lemma}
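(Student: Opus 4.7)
The plan is to prove the three parts separately, each using the definition that quasiconvexity is equivalent to all sublevel sets being convex (intervals in $\R$).

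For part (a), I would prove both directions directly from the sublevel-set definition. For the ``if'' direction, each of the three monotonicity conditions makes sublevel sets intervals by inspection. For the ``only if'' direction, assume $f$ is continuous and quasiconvex. If $f$ is monotone, we are done, so assume it is neither nondecreasing nor nonincreasing. Then there exist points witnessing both behaviors, and by continuity and quasiconvexity I would argue that the set of minimizers (on $\cD$ or its relative interior) is a closed interval, pick $c$ to be its left endpoint, and show that on either side of $c$ the function is monotone. Specifically, to rule out a ``bump'' on $(-\infty, c) \cap \cD$, I would suppose $s < t < c$ with $f(s) < f(t)$; then the sublevel set $\{x : f(x) \leq f(s)\}$ contains $s$ and some point $\geq c$ (by choice of $c$ as a minimizer), so by convexity it must contain $t$, a contradiction. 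The symmetric argument handles $t \geq c$.

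For part (b), I would argue by contrapositive. If $f$ is not quasiconvex, then some sublevel set $\{x : f(x) \leq \alpha\}$ is not convex, meaning there exist $x < y < z$ in $\cD$ with $f(x), f(z) \leq \alpha$ but $f(y) > \alpha$. By continuity on the compact interval $[x, z]$, $f$ attains a maximum at some interior point $x^\star \in (x, z)$. Twice differentiability then gives $\partial f / \partial x (x^\star) = 0$ and $\partial^2 f / \partial x^2 (x^\star) \leq 0$, contradicting the hypothesis that the second derivative is strictly positive at every critical point.

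For part (c), I would directly characterize the sublevel sets of $f = p/q$. Since $p \geq 0$ and $q > 0$ on $\cD$, we have $f \geq 0$, so for $\alpha < 0$ the sublevel set is empty (hence trivially convex). For $\alpha \geq 0$, the sublevel set is
\[
    \{x \in \cD : p(x)/q(x) \leq \alpha\}
    = \{x \in \cD : p(x) - \alpha q(x) \leq 0\},
\]
using $q(x) > 0$ to clear the denominator without flipping the inequality. The function $p - \alpha q$ is the sum of the convex function $p$ and $-\alpha$ times the concave function $q$, which is convex since $\alpha \geq 0$. Hence its $0$-sublevel set is convex, establishing quasiconvexity of $f$.

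I do not expect any step to be a serious obstacle; this is a classical result and each part reduces to a short direct argument from the definition. The only mildly delicate point is picking the crossover constant $c$ in part (a) when $f$ need not attain its infimum on $\cD$, which I would handle by taking $c = \inf\{t \in \cD : f \text{ is nondecreasing on } [t, \infty) \cap \cD\}$ and verifying the two cases using continuity and the interval property of sublevel sets.
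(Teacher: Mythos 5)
The paper does not actually prove this lemma: it is imported as a black box from \citet{boyd-convex}, \S 3.4, and used only as supporting material for the quasiconvexity arguments in the appendix (Lemmas~\ref{lem:ratio-quasi} and~\ref{lem:cost-quasi}). So there is no in-paper proof to compare against; what you have written is a correct, self-contained replacement for the citation, and all three arguments are the standard ones. Part (c) is airtight: for $\alpha \geq 0$ the sublevel set is exactly $\{x : p(x) - \alpha q(x) \leq 0\}$ because $q > 0$, and $p - \alpha q$ is convex. Part (b) via the contrapositive is also fine; the only point worth making explicit is the vacuous case where $f'$ never vanishes on $\cD$, which is handled by Darboux's theorem ($f'$ has the intermediate value property, so it cannot change sign without vanishing, hence $f$ is monotone and trivially quasiconvex). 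Part (a) is the only delicate piece, and you correctly identified the issue: the crossover point $c$ may not exist as a minimizer when the infimum is not attained. In fact that case cannot arise for a non-monotone $f$: if a minimizing sequence escapes to a boundary point of $\cD$, quasiconvexity forces every sublevel set $\{f \leq f(x)\}$ to contain the whole segment from $x$ toward that boundary, making $f$ monotone --- so whenever case~3 is needed, the minimum is attained and your ``bump'' argument with the sublevel set $\{x : f(x) \leq f(s)\}$ goes through. The net effect of your approach is a fully elementary verification of a result the paper takes on faith, at the cost of a page of routine casework; nothing in it would change how the lemma is used downstream.
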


\noindent The following facts about scalar LQR problems can be derived from
the LQR Riccati equation and some calculus (not shown).
\begin{lemma}
\label{lem:scalar-lqr-facts}
For the scalar LQR problem with $a > 0, b > 0$ and $q = r = 1$,
the optimal linear controller $\kopt{a,b}$ is given by the closed-form expression
	\[
		k^\star_{a,b}
		=
		\min_{k \in \R} J_{a,b}(k)
		=
		\min_{k \in \R} \frac{1 + k^2}{-2(a + bk)}
		=
		-\frac{a + \sqrt{a^2 + b^2}}{b}.
	\]
For fixed $a$, the map from $b$ to $k^\star_{a,b}$
is continuous and strictly increasing on the domain $b \in (0, \infty)$
and has the range $(-\infty, -1)$.
For any $k \in (-\infty, -1)$,
there exists a unique $b_k \in (-\infty, -1)$
for which $k = k^\star_{a,b_k}$, given by
\[
	b_k = \frac{2ak}{1 - k^2}.
\]
\end{lemma}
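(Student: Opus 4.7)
The plan is to verify each of the three claims by direct computation on the scalar LQR expressions. The cost $J_{a,b}(k) = (1 + k^2)/(-2(a + bk))$ from \eqref{eq:Jbk} is finite only on the stabilizing half-line $k < -a/b$, so that is where the minimizer must live.

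For the closed form I would prefer the Riccati route: with $Q = R = 1$ the scalar algebraic Riccati equation reduces to $b^2 p^2 - 2 a p - 1 = 0$, whose maximal (larger) root is $p = (a + \sqrt{a^2 + b^2})/b^2$, and then $k^\star_{a,b} = -b p = -(a + \sqrt{a^2 + b^2})/b$. As a sanity check, the first-order condition $\partial J_{a,b}/\partial k = 0$ reduces to the quadratic $b k^2 + 2 a k - b = 0$ with roots $(-a \pm \sqrt{a^2 + b^2})/b$, and only the minus-sign root lies in the stabilizing region $k < -a/b$, so it matches the Riccati answer.

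For monotonicity and range I would write $k^\star_{a,b} = -f(b)$ with $f(b) = (a + \sqrt{a^2 + b^2})/b$ on $(0, \infty)$. A direct differentiation, using $u = \sqrt{a^2 + b^2}$ with $u' = b/u$, gives $f'(b) = -a(a + u)/(u b^2) < 0$, so $k^\star_{a,b}$ is continuous and strictly increasing in $b$. Taking limits, $b \to 0^+$ sends $k^\star_{a,b} \to -\infty$ because $(a + \sqrt{a^2 + b^2})/b \sim 2a/b$, while $b \to \infty$ sends $k^\star_{a,b} \to -1$ after factoring $b$ out of the square root. This yields the range $(-\infty, -1)$.

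For the inverse, existence and uniqueness of $b_k$ for each $k \in (-\infty, -1)$ follow from the strictly increasing bijection just established. To obtain the explicit formula, start from $-kb = a + \sqrt{a^2 + b^2}$, square to get $k^2 b^2 + 2 a k b + a^2 = a^2 + b^2$, cancel $a^2$, divide by $b$, and rearrange to $b_k = 2ak/(1 - k^2)$. A brief sign check confirms $b_k > 0$ (both numerator and denominator are negative when $k < -1$) and that $-k b_k - a > 0$, so the squaring step introduced no extraneous root. The main obstacle is essentially bookkeeping — choosing the correct quadratic root in the first part and verifying that the squaring step is reversible in the third — so I expect no substantive difficulty; these are the routine scalar LQR manipulations that the paper omits.
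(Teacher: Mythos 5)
Your proof is correct and follows exactly the route the paper indicates but omits ("can be derived from the LQR Riccati equation and some calculus (not shown)"): the maximal Riccati root gives the closed form, the derivative $-a(a+\sqrt{a^2+b^2})/(b^2\sqrt{a^2+b^2})$ gives strict monotonicity with the limits $b\to 0^+$ and $b\to\infty$ yielding the range $(-\infty,-1)$, and the squaring step for $b_k$ is verified to be reversible. No gaps; note only that the statement's ``$b_k \in (-\infty,-1)$'' is a typo in the paper for $b_k \in (0,\infty)$, which your sign check correctly establishes.
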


\newcommand{\bkdomain}{\cD}
\newcommand{\bdomain}{\cD^b}
\newcommand{\kdomain}{\cD^k}

\subsection{Proof of \lemmaref{lem:subopt-convex}}

We first recall the statement of the lemma.
\quasi*

\noindent Instead of a monolithic proof,
we present supporting material in \lemmaref{lem:ratio-quasi,lem:cost-quasi}.
We then show the main result in \lemmaref{lem:increasing-main},
which considers $\alpha$-suboptimal neighborhoods on all of $\R$
instead of restricted to $\Bset$.
\lemmaref{lem:subopt-convex} will follow as a corollary.

We proceed with more setup.
Recall that the scalar \DDFproblem\ is defined by
$\Aset = \{a\}$ and $\Bset = [\recip \theta, 1]$, where $a > 0$.
For this section,
let
\[
	\bkdomain
	= \{ (b, k) \in (0, \infty) \times \R : a + bk < 0 \}
\]
(note that $J_b(k) < \infty \iff a + bk < 0$).
Denote its projections by
$\bdomain(k) = \{ b: (b, k) \in \bkdomain \}$
and
$\kdomain(b) = \{ k: (b, k) \in \bkdomain \}$.
We compute the suboptimality ratio $r : \bkdomain \mapsto \R$ by
\[
	r(b, k) =
	\frac{J_b(k)}{J^\star_b}
	=
	\nofrac{
		\frac{
			1 + k^2
		}{
			-2(a + bk)
		}
	}{
		\frac{
			a + \sqrt{a^2 + b^2}
		}{
			b^2
		}
	}
	=
	\frac{
		(1 + k^2) b^2
	}{
		-2(a + bk)(a + \sqrt{a^2 + b^2})
	}.
\]
We denote its sublevel sets with respect to $b$ for fixed $k$ by
\[
	\bdomain_\alpha(k) = \{ b \in \bdomain(k) : r(b, k) \leq \alpha \}.
\]
\begin{lemma}
\label{lem:ratio-quasi}
	For fixed $k < 0$, the ratio $r(b, k)$ is quasiconvex on $\bdomain_k$,
	and there is at most one $b \in \bdomain_k$ at which $\partial r / \partial b = 0$.
\end{lemma}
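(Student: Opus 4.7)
The plan is to reduce the critical-point equation $\partial r/\partial b = 0$ to a single algebraic identity, show it has at most one root in $\bdomain_k$, and then use the boundary behaviour of $r$ together with uniqueness of the critical point to read off the monotonicity structure of $r$, from which quasiconvexity will follow by \Cref{lem:quasiconvex}.

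First I would take the logarithmic derivative. Since $r > 0$ on $\bdomain_k$, the zeros of $\partial r/\partial b$ coincide with those of $\partial \log r/\partial b$, and after rationalising the derivative of the $\log(a+\sqrt{a^2+b^2})$ term one obtains
\[
\frac{\partial \log r}{\partial b} \;=\; \frac{1}{b} \;-\; \frac{k}{a+bk} \;+\; \frac{a}{b\sqrt{a^2+b^2}}.
\]
Setting this to zero and clearing denominators reduces, after discarding a spurious $b=0$ factor, to the identity $\sqrt{a^2+b^2} = -(a+bk)$. The right-hand side is positive precisely because $a+bk < 0$ on $\bdomain_k$, so squaring is legitimate and yields $b^2(1-k^2) = 2abk$, whose unique nonzero solution is $b_k = 2ak/(1-k^2)$. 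This value lies in $\bdomain_k \subset (0,\infty)$ only when $k < -1$; for $k \in [-1, 0)$ there is no critical point inside $\bdomain_k$ at all.

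To upgrade uniqueness of the critical point to quasiconvexity I would then use an endpoint argument rather than computing $\partial^2 r/\partial b^2$ explicitly. As $b \downarrow a/|k|$, the factor $a+bk \to 0^-$ and so $r \to +\infty$; as $b \to \infty$, a routine asymptotic expansion gives $r \to (1+k^2)/(-2k)$, a positive finite number. If $b_k \in \bdomain_k$, continuity together with the uniqueness of the critical point forces $r$ to be strictly decreasing on $(a/|k|, b_k)$ (it has to fall from $\infty$ to a finite value) and strictly increasing on $(b_k, \infty)$ (otherwise an additional critical point would be required to reach the positive limit from below). If $b_k \notin \bdomain_k$, then $r$ is monotonic on $\bdomain_k$, and the same endpoint values force it to be strictly decreasing. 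In either case, \Cref{lem:quasiconvex}(a) applies.

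The main obstacle is really just the algebra of the critical-point reduction: the equation mixes rational and irrational pieces, and squaring can introduce extraneous roots, so the sign check $-(a+bk) > 0$ on $\bdomain_k$ is essential in order to discard spurious solutions and guarantee that the root we extract is genuine. Once that step is secure, the rest of the argument is a short monotonicity observation and deliberately avoids the tedious direct computation of $\partial^2 r/\partial b^2$.
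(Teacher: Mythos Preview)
Your argument is correct and departs from the paper in its second half. Both you and the paper reduce $\partial r/\partial b = 0$ to a single algebraic identity and extract the unique candidate $b_k = 2ak/(1-k^2)$; your logarithmic-derivative reduction to $\sqrt{a^2+b^2} = -(a+bk)$ is in fact cleaner than the paper's, which multiplies $\partial r/\partial b$ by a carefully chosen positive factor, squares, and must then verify the resulting root is not spurious. Where you diverge is in certifying quasiconvexity once $b_k$ is in hand: the paper evaluates $\partial^2 r/\partial b^2$ explicitly at $b_k$, checks the sign, and invokes \Cref{lem:quasiconvex}\ref{secondorder}, a computation long enough that the intermediate expressions are suppressed in the text. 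You instead read off the monotonicity pattern from the boundary behaviour $r \to +\infty$ as $b \downarrow a/|k|$ and $r \to (1+k^2)/(-2k)$ as $b \to \infty$, combine it with uniqueness of the critical point, and invoke \Cref{lem:quasiconvex}\ref{alternatives}. This buys a much shorter proof at no loss of rigor.

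One small imprecision: your parenthetical that $r$ is increasing on $(b_k,\infty)$ because otherwise ``an additional critical point would be required to reach the positive limit from below'' presupposes that the limit is approached from below, which you have not shown. The clean fix is to note that $b_k$ is exactly the $b$ for which $k = k^\star_{b_k}$ (this is the content of \Cref{lem:scalar-lqr-facts}), so $r(b_k,k)=1$, while $(1+k^2)/(-2k) = 1 + (k+1)^2/(-2k) > 1$ for $k<-1$. Even without this patch your quasiconvexity conclusion survives: with a unique critical point and $\partial r/\partial b < 0$ on its left, the sign of $\partial r/\partial b$ on $(b_k,\infty)$ is constant, so $r$ is either decreasing-then-increasing or monotone decreasing, and both cases are covered by \Cref{lem:quasiconvex}\ref{alternatives}.
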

\begin{proof}
	By inspection, $r(b, k)$ is smooth on $\bdomain$.
	We now show that the second-order condition of \lemmaref{lem:quasiconvex}\ref{secondorder} holds.
	To solve $\partial r / \partial b = 0$ for $b$,
we multiply $\partial r/ \partial b$ (not shown due to length)
by the strictly positive factor
\[
    \frac{2 \left(a + b k\right)^{2} \left(a + \sqrt{a^{2} + b^{2}}\right)^{2} \sqrt{a^{2} + b^{2}}}{a b \left(k^{2} + 1\right)}
\]
and set the result equal to zero to get the equation
\[
    2 a^{2} + a b k + b^{2} = \left(- 2 a - b k\right) \sqrt{a^{2} + b^{2}}.
\]
Squaring both sides (which may introduce spurious solutions)
and collecting terms yields the equation
\(
    - 2 a k - b k^{2} + b = 0,
\)
with the solution $b = {}$ $\frac{2 a k}{1 - k^{2}}$.
This is the expression for $b_k$ from \lemmaref{lem:scalar-lqr-facts}.
Note that it is only positive for $k < -1$.
If $k \in [-1, 0)$, then there are no stationary points in $\bdomain_k$.
Otherwise, substitution into $\partial r / \partial b$ confirms that this solution is not spurious,
so it is the only stationary point of $r$ with respect to $b$.
We now must check the second-order condition for $k < -1$.
Evaluating $\partial^2 r / \partial b^2$ (not shown due to length)
and multiplying by the strictly positive factor
\[
    - \frac{\left(a + \sqrt{a^{2} + b^{2}}\right) \left(2 a + 2 b k\right)}{k^{2} + 1},
\]
we have
\begin{dmath*}
    \sign \left( \frac{\partial^2 r}{\partial b^2} \right)
    =
    \sign \left( \frac{b^{4}}{\left(a + \sqrt{a^{2} + b^{2}}\right) \left(a^{2} + b^{2}\right)^{\frac{3}{2}}} + \frac{2 b^{4}}{\left(a + \sqrt{a^{2} + b^{2}}\right)^{2} \left(a^{2} + b^{2}\right)} + \frac{2 b^{3} k}{\left(a + b k\right) \left(a + \sqrt{a^{2} + b^{2}}\right) \sqrt{a^{2} + b^{2}}} + \frac{2 b^{2} k^{2}}{\left(a + b k\right)^{2}} - \frac{5 b^{2}}{\left(a + \sqrt{a^{2} + b^{2}}\right) \sqrt{a^{2} + b^{2}}} - \frac{4 b k}{a + b k} + 2 \right).
\end{dmath*}
Evaluating at the stationary point $b_k$, this reduces to
\begin{dmath*}
    \left. \sign \left( \frac{\partial^2 r}{\partial b^2} \right) \right|_{b_k, k}
    =
    \sign \left( \frac{2 k^{2} \left(k - 1\right)^{2} \left(k + 1\right)^{2}}{\left(k^{2} + 1\right)^{3}} \right).
\end{dmath*}
Recalling that $k < -1$, the sign is positive.

	The conclusion follows from \lemmaref{lem:quasiconvex}\ref{secondorder}.
\end{proof}

\begin{lemma}
\label{lem:cost-quasi}
	For fixed $b$, the cost $J_b(k)$ is quasiconvex on $\kdomain(b)$.
	Also, $J_b(k)$ is not monotonic,
	so case \ref{alternative-incdec} of \lemmaref{lem:quasiconvex}\ref{alternatives} applies.
\end{lemma}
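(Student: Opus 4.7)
The plan is to dispatch both halves of the lemma by direct appeal to \lemmaref{lem:quasiconvex}. For quasiconvexity, I would write $J_b(k) = p(k)/q(k)$ with $p(k) = 1 + k^2$ and $q(k) = -2(a + bk)$, and then invoke \lemmaref{lem:quasiconvex}\ref{quotient}. The hypotheses are immediate: $p$ is convex and strictly positive on all of $\R$, while $q$ is affine (hence concave), and the definition $\kdomain(b) = \{k : a + bk < 0\}$ forces $q > 0$ throughout the domain. This quotient argument avoids any second-derivative bookkeeping that a direct application of \lemmaref{lem:quasiconvex}\ref{secondorder} would demand.

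For non-monotonicity, I would exhibit an interior minimizer together with blowup at both endpoints of $\kdomain(b)$. The closed form $\kopt{a,b} = -(a + \sqrt{a^2+b^2})/b$ from \lemmaref{lem:scalar-lqr-facts} yields $a + b\kopt{a,b} = -\sqrt{a^2+b^2} < 0$, so $\kopt{a,b}$ lies strictly interior to $\kdomain(b)$ and $J_b(\kopt{a,b}) = J^\star_b$ is finite. For the boundary behavior, as $k \to (-a/b)^{-}$ the denominator of $J_b$ approaches $0^+$ while the numerator stays bounded away from zero, and as $k \to -\infty$ the numerator grows like $k^2$ while the denominator grows only linearly in $|k|$; in both limits $J_b(k) \to \infty$. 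A continuous function on an interval that exceeds its interior minimum on either side cannot be globally nondecreasing or globally nonincreasing.

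Combining these two facts with \lemmaref{lem:quasiconvex}\ref{alternatives} rules out the first two alternatives there and leaves case \ref{alternative-incdec} with threshold $c = \kopt{a,b}$. I do not expect a serious obstacle: the heavy lifting is performed by the quoted properties of quasiconvex functions and by the explicit formula for $\kopt{a,b}$. The one small subtlety is the strict inequality $a + b\kopt{a,b} < 0$, which is precisely what rules out $\kopt{a,b}$ being a boundary limit and thereby lets us deduce genuine non-monotonicity rather than weak monotonicity with a plateau at the edge of the domain. As a sanity-check fallback I could instead apply \lemmaref{lem:quasiconvex}\ref{secondorder}, since $\partial J_b / \partial k = 0$ is a short quadratic whose single root in $\kdomain(b)$ coincides with $\kopt{a,b}$, but the quotient decomposition is strictly cleaner.
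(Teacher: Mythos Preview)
Your proposal is correct and mirrors the paper's own proof: the paper also applies \lemmaref{lem:quasiconvex}\ref{quotient} to the decomposition $J_b(k) = (1+k^2)/(-2(a+bk))$ for quasiconvexity, and then argues non-monotonicity from the fact that $\kopt{a,b}$ is an interior minimizer of the smooth function $J_b$. Your explicit endpoint-blowup argument is slightly more detailed than the paper's one-line remark, but the content is the same.
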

\begin{proof}
	We have
	\[
		J_b(k)
		=
		\frac{
			1 + k^2
		}{
			-2(a + bk)
		}.
	\]
	The numerator is nonnegative and convex on $k \in \R$.
	The denominator is linear (hence concave) and positive on $\kdomain(b)$.
	Quasiconvexity follows from \lemmaref{lem:quasiconvex}\ref{quotient}.
	Nonmonotonicity follows from the fact that $J_b(k)$ is smooth on $\kdomain(b)$
	and has a unique optimum at $\kopt{b}$, which is not on the boundary of $\kdomain(b)$.
\end{proof}

\noindent We now combine these into the main result.

\begin{lemma}
\label{lem:increasing-main}
	For a scalar \DDFproblem,
	if $\alpha > 1$ and $k < -1$,
	then $\bdomain_\alpha(k)$ is either:
	a bounded closed interval $[b_1, b_2]$,
		with $b_1$ and $b_2$ increasing in $k$, or
	a half-bounded closed interval $[b_1, \infty)$,
		with $b_1$ increasing in $k$.
\end{lemma}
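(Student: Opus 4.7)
The plan is to decompose the proof into two tasks: first, show that for each fixed $k < -1$ the set $\bdomain_\alpha(k)$ is a closed interval (possibly extending to $+\infty$), and second, establish that its endpoints are strictly increasing in $k$. The first task is essentially immediate from \lemmaref{lem:ratio-quasi}; the second reduces to a sign analysis via the implicit function theorem at the level set $\{r = \alpha\}$.

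For the shape, I invoke \lemmaref{lem:ratio-quasi} to obtain that $b \mapsto r(b, k)$ is quasiconvex on $\bdomain(k) = (a/|k|, \infty)$. Sublevel sets of quasiconvex functions on $\R$ are intervals, and closedness inside $\bdomain(k)$ is immediate from continuity of $r$. Since the denominator $-2(a + bk)$ of $J_b(k)$ vanishes as $b \to (a/|k|)^+$ while the numerator stays positive, $r(b, k) \to \infty$ at the left boundary of $\bdomain(k)$, forcing the left endpoint $b_1(k) > a/|k|$ strictly. Whether the interval is bounded above depends on the finite limit
\[
	\lim_{b \to \infty} r(b, k) = \frac{1 + k^2}{2|k|},
\]
read off directly from the closed form of $r$: if this limit exceeds $\alpha$ we obtain $[b_1(k), b_2(k)]$, otherwise $[b_1(k), \infty)$.

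For monotonicity, I parametrize each boundary curve implicitly by $r(b_i(k), k) = \alpha$ and apply the implicit function theorem to get $d b_i / d k = -(\partial r / \partial k)/(\partial r / \partial b)$. A direct computation from $J_b(k) = (1 + k^2)/(-2(a + bk))$ yields $\partial J_b/\partial k = (b(1 - k^2) - 2ak)/(2(a + bk)^2)$. Since $J^\star_b$ is independent of $k$, $\partial r / \partial k$ has the same sign as $b(1 - k^2) - 2ak$, which vanishes exactly at $b = b_k = 2ak/(1 - k^2)$. For $k < -1$ the factor $1 - k^2$ is negative, so this sign is negative for $b > b_k$ and positive for $b < b_k$. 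By quasiconvexity of $r(\cdot, k)$ with unique minimizer at $b_k$ (again from \lemmaref{lem:ratio-quasi}), $\partial r / \partial b$ is negative on $(a/|k|, b_k)$ and positive on $(b_k, \infty)$. At the right endpoint $b_2(k) > b_k$ both partial derivatives have signs giving $d b_2 / d k > 0$; at the left endpoint $b_1(k) < b_k$ both signs flip and the ratio is again strictly positive. \lemmaref{lem:subopt-convex} then follows as a corollary by intersecting $\bdomain_\alpha(k)$ with $\Bset = [\recip \breadth, 1]$, which is either empty or a closed subinterval whose endpoints inherit the monotonicity.

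The main obstacle is bookkeeping for the two cases: as $k$ varies within $(-\infty, -1)$, the limit $(1 + k^2)/(2|k|)$ may cross the threshold $\alpha$, causing $\bdomain_\alpha(k)$ to switch between the bounded and unbounded forms at a critical value of $k$. I would handle this by stating monotonicity of $b_1$ on all of $(-\infty, -1)$ and monotonicity of $b_2$ only on the sub-interval where the upper bound is finite. Continuity of the implicit curve away from this transition, combined with the sign analysis above, is enough to conclude the monotonicity claim piecewise; no additional argument is needed at the transition itself because $b_2(k) \to \infty$ there.
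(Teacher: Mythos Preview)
Your proposal is correct and follows essentially the same architecture as the paper: quasiconvexity of $r(\cdot,k)$ from \lemmaref{lem:ratio-quasi} gives the interval shape, and the implicit function theorem with a sign analysis of $\partial r/\partial b$ and $\partial r/\partial k$ at the endpoints gives monotonicity. The one substantive difference is how you obtain the sign of $\partial r/\partial k$: the paper invokes a separate quasiconvexity result for $k \mapsto J_b(k)$ (\lemmaref{lem:cost-quasi}) together with the monotonicity of $b \mapsto k^\star_b$ from \lemmaref{lem:scalar-lqr-facts}, whereas you compute $\partial J_b/\partial k$ directly and read off its sign from the linear factor $b(1-k^2)-2ak$. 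Your route is slightly more self-contained (it bypasses \lemmaref{lem:cost-quasi} entirely) and even yields strict inequalities $db_i/dk > 0$, while the paper's quasiconvexity argument only delivers $\geq 0$; conversely, the paper's version is more robust to perturbations of the cost formula since it does not depend on the explicit closed form.
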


\begin{proof}
	By \lemmaref{lem:ratio-quasi}, due to quasiconvexity $\bdomain_\alpha$ is convex.
	The only convex sets on $\R$ are the empty set and all types of intervals:
	open, closed, and half-open.
	We know $\bdomain_\alpha$ is not empty because it contains $b_k$.
	We can further assert that
	$\bdomain_\alpha$ has a closed lower bound
	because $\lim_{b \to (-a/k)} r(b, k) = \infty$
	(see \citet[\S A.3.3]{boyd-convex} for details).
	However, the upper bound may be closed or infinite.
	We handle the two cases separately.

	\paragraph{Bounded case.}
	Fix $k_0 < -1$.
	Suppose $\bdomain_\alpha(k_0) = [b_1, b_2]$ for $0 < b_1 < b_2 < \infty$.
	By the implicit function theorem (IFT), at any $(b_0, k_0)$ satisfying $r(b_0, k_0) = \alpha$,
	if $\partial r / \partial b|_{b_0, k_0} \neq 0$
	then there exists an open neighborhood around $(b_0, k_0)$
	for which the solution to $r(b, k) = \alpha$ can be expressed as
	$(g(k), k)$, where $g$ is a continuous function of $k$ and
	\[
		\left. \frac{\partial g(k)}{\partial k} \right|_{k_0} =
		\left. - \left( \frac{\partial r}{\partial b} \right)^{-1}
		\frac{\partial r}{\partial k} \right|_{b_0, k_0}.
	\]
	By the continuity and quasiconvexity of $r$,
	and the fact that $\partial r / \partial b = 0$ only at $b_k$
	(\lemmaref{lem:ratio-quasi})
	we know that $r(b_1, k_0) = r(b_2, k_0) = \alpha$
	and
	\[
		\left. \frac{\partial r}{\partial b} \right|_{b_1, k_0} < 0
		\quad \text{and} \quad
		\left. \frac{\partial r}{\partial b} \right|_{b_2, k_0} > 0.
	\]
	By \lemmaref{lem:scalar-lqr-facts},
	since $k < -1$
	there exists $b_k > 0$ satisfying
	$k = k^\star_{b_k}$.
	Since $r(b_k, k) = 1$ and $\alpha > 1$, we know $b_{k_0} \in (b_1, b_2)$.
	Again by \lemmaref{lem:scalar-lqr-facts},
	the map from $b$ to $k^\star_b$ is increasing in $b$.
	Therefore,
	\(
		{k^\star_{b_1} < k_0 < k^\star_{b_2}}.
	\)
	By the quasiconvexity and nonmonotonicity of $J_b(k)$ from \lemmaref{lem:cost-quasi},
	via \lemmaref{lem:quasiconvex}\ref{alternatives}
	we have
	\[
		\left. \frac{\partial r}{\partial k} \right|_{b_1, k_0} \geq 0
		\quad \text{and} \quad
		\left. \frac{\partial r}{\partial k} \right|_{b_2, k_0} \leq 0.
	\]
	Therefore, the functions $g_1, g_2$ satisfying the conclusion of the IFT
	in the neighborhoods around $(b_1, k_0)$ and $(b_2, k_0)$ respectively
	also satisfy
	\[
		\left. \frac{ \partial g_1(k)}{ \partial k } \right|_{b_1, k_0} \geq 0
		\quad \text{and} \quad
		\left. \frac{ \partial g_2(k)}{ \partial k } \right|_{b_2, k_0} \geq 0.
	\]
	Therefore, $b_1$ and $b_2$ are locally nondecreasing in $k$.

	\paragraph{Unbounded case.}
	Suppose $\bdomain_\alpha(k) = [b_1, \infty)$ for $b_1 < \infty$.
	By the same IFT argument as in the bounded case, $b_1$ is increasing in $k$.
	By the quasiconvexity of $r$ in $b$,
	the value of $r$ is increasing for $b  > b_k$,
	but the definition of $\bdomain_\alpha(k)$ implies that
	$r(b, k) \leq \alpha$ for all $b > b_k$.
	Therefore, $\lim_{b \to \infty} r(b, k)$ exists
	and is bounded by $\alpha$.
	In particular,
	\begin{equation*}
		\begin{split}
			\lim_{b \to \infty} r(b, k)
			&=
			\lim_{b \to \infty}
			\frac{
				(1 + k^2) b^2
			}{
				-2(a + bk)(a + \sqrt{a^2 + b^2})
			}
			\\ &=
			\lim_{b \to \infty}
			\frac{
				(1 + k^2) b^2 / b^2
			}{
				-2(a + bk)(a + \sqrt{a^2 + b^2}) / b^2
			}
			\\ &=
			-
			\frac{
				1 + k^2
			}{
				2k
			}.
		\end{split}
	\end{equation*}
	Taking the derivative shows that this value is decreasing in $k$ for $k < 0$.
	Therefore, if $k < k' < 0$ then
	\[
		\lim_{b \to \infty} r(b, k') \leq \lim_{b \to \infty} r(b, k) \leq \alpha.
	\]
	The property that $r(b, k')$ is increasing in $b$ for $b > b_k$
	further ensures that
	$r(b, k') \leq \alpha$ for all $b > b_k$.
	Therefore, $\bdomain_\alpha(k')$ is also unbounded.
\end{proof}

\noindent For completeness, we prove \lemmaref{lem:subopt-convex}.

\quad

\begin{proof} (of \lemmaref{lem:subopt-convex}).
	By \lemmaref{lem:increasing-main},
	$\bdomain_\alpha(k)$ is either
	a bounded closed interval $[b_1, b_2]$,
		with $b_1$ and $b_2$ increasing in $k$, or
	a half-bounded closed interval $[b_1, \infty)$,
		with $b_1$ increasing in $k$.
	Recall that $\suboptneighb{\alpha}{k} = \bdomain_\alpha(k) \cap \Bset$
	with $\Bset = [\recip \theta, 1]$.
	Therefore, the half-bounded case can be reduced to the bounded case with $b_2 = 1$.
	The intersection can be expressed as
	\[
		\suboptneighb{\alpha}{k} = [\max\{b_1, \textstyle \recip \theta\}, \min\{b_2, 1\}],
	\]
	where the interval $[a, b]$ is defined as the empty set if $a > b$.
	Taking the maximum or minimum of a nonstrict monotonic function and a constant
	preserves the monotonicity, so we are done.
\end{proof}

}

\end{document}